\theoremstyle{plain}
\newtheorem{lemma}{Lemma}[section]
\newtheorem{definition}[lemma]{Definition}
\newtheorem{proposition}[lemma]{Proposition}
\newtheorem{corollary}[lemma]{Corollary}
\newtheorem{theorem}[lemma]{Theorem}
\newtheorem{remark}[lemma]{Remark}
\newtheorem{example}[lemma]{Example}
\newcommand{\Lie}[1]{\operatorname{\textsl{#1}}}
\newcommand{\Gtwo}{\ifmmode{{\rm G}_2}\else{${\rm G}_2$}\fi}
\def\sideremark#1{\ifvmode\leavevmode\fi\vadjust{\vbox to0pt{\vss
 \hbox to 0pt{\hskip\hsize\hskip1em
 \vbox{\hsize2.5cm\tiny\raggedright\pretolerance10000
 \noindent #1\hfill}\hss}\vbox to8pt{\vfil}\vss}}}%
\newfont{\eusm}{eusm10 scaled \magstep1}
\newfont{\eusmiii}{eusm10 scaled \magstep3}
\newcommand{\got}[1]{\mbox{\eusm #1}}
\newcommand{\comp}{\makebox[7pt]{\raisebox{1.5pt}{\tiny $\circ$}}}
\newcommand{\RR}{\mbox{{\sl I}}\!\mbox{{\sl R}}}
\newcommand{\trace}{\mathop{\rm trace}}
\title{Energy of generalized distributions}
\author{J.~C.~Gonz{\'a}lez-D{\'a}vila}
\address[J.~C.~Gonz{\'a}lez-D{\'a}vila]{Departamento de Matem\'aticas, Estad\'istica e Investigaci\'on Ope\-ra\-tiva,
  University of La Laguna\\ 38200 La Laguna, Tenerife, Spain}
\email{jcgonza@ull.es}
\date{\today}
\begin{document}

\maketitle

\begin{abstract}{\indent}
We consider the energy of smooth generalized distributions and also of singular foliations on compact Riemannian manifolds for which the set of their singularities consists of a finite number of isolated points and of pairwise disjoint closed submanifolds. We derive a lower bound for the energy of all $q$-dimensional almost regular distributions, for each $q < \dim M,$ and find several examples of foliations which minimize the energy functional over certain sets of smooth generalized distributions.

\vspace{4mm}

\noindent {\footnotesize \emph{Keywords and phrases:} Generalized distribution, singular foliation, mixed scalar curvature, energy of distributions, tubular and radial foliations, compact rank one symmetric spaces.} \vspace{2mm}

\noindent {\footnotesize \emph{Mathematics Subject Classification}: Primary 53C20; Secondary 53C10, 53C12, 53C15.}
\end{abstract}
\begin{figure}[b]  \vspace{-5mm}

\hspace{-5.7cm}
{\footnotesize Supported by D.G.I. (Spain) Project MTM2013-46961-P.}
\end{figure}

\section{Introduction}\indent
Let $\sigma:x\in M\mapsto \sigma(x) \subset T_{x}M$ be a smooth {\em generalized} {\em distribution} \cite{Suss} on an $n$-dimensional compact and connected Riemannian manifold $(M,g).$ Denote by $M_{r}$  the subset of its regular points. When the restriction $\sigma_{r}$ of $\sigma$ to $M_{r}$ is a regular distribution, that is, the lower semicontinuous function $d,$ given by $d(x) = \dim \sigma(x),$ is a constant $q$ on $M_{r},$ $\sigma$ is said to be an $q$-dimensional {\em almost regular distribution}. If $d$ is constant on whole $M,$ $\sigma$ is the classical distribution. For the sake of brevity, we will refer to generalized distributions simply as {\em distributions} and we will use the term {\em regular} for the classical distribution.

In the general case, we can only guarantee that $d$ is constant on each one of the connected components of $M_{r}.$ Let $1\leq q_{1}<q_{2}<\dots <q_{l}\leq n= \dim M$ be the values of $d$ on $M_{r}$ and put $M_{r}^{i} :=\{x\in M_{r}\mid d(x) = q_{i}\},$ $i\in \{1,\dots ,l\},$ the union of the connected components, supposed to be oriented, on which $d$ is constant equals to $q_{i}.$ Then $\sigma_{r}$ is the union of the regular $q^{i}$-dimensional distributions $\sigma^{i}$ on $M_{r}^{i},$ $i = 1,\dots ,l,$ and it could be seen as a smooth section of the Grassmannian bundle $\pi: G(M_{r}) = \bigcup_{i=1}^{l}G_{q_{i}}(M_{r}^{i})\to M_{r},$ where $G_{q_{i}}(M_{r}^{i}) = \bigcup_{x\in M_r^{i}}G_{q_{i}}(T_{x}M_{r}^{i})$ is the Grassmannian bundle of the $q_{i}$-dimensional linear subspaces in the tangent space $TM_{r}^{i}.$ Because $G_{q}(M_{r}^{i})$ is diffeomorphic to the {\em homogeneous fibre bundle} ${\mathcal S}{\mathcal O}(M_{r}^{i})/S(O(q_{i})\times O(n-q_{i})),$ ${\mathcal S}{\mathcal O}(M_{r}^{i})$ being the principal $SO(n)$-bundle of oriented orthonormal frames of $(M_{r}^{i},g_{M_{r}^{i}}),$ $G(M_{r})$ will be provided (see Section $3)$ with a natural Riemannian metric $g^{K},$ known as the {\em Kaluza-Klein metric} \cite{Wood}. It makes $\pi:(G(M_{r}),g^{K})\to (M_{r},g_{M_{r}})$ a Riemannian submersion with totally geodesic fibres, where $g_{M_{r}}$ denotes the induced metric by $g$ on $M_{r}.$

The energy of a $q$-dimensional regular distribution $\sigma$ is defined in \cite{GDHarm} (see also \cite{Choi-Yim} and \cite{Wood1}) as the energy of the map $\sigma: (M,g)\to (G_{q}(M),g^{K}).$ An equivalent definition for oriented regular distributions, considered as sections of the bundle of unit decomposable $q$-vectors equipped with the {\em generalized Sasaki metric}, is given in \cite{ChNW} and \cite{GGV}, among others. For an arbitrary map $\sigma:(M, g)\to (N, h)$ between Riemannian
manifolds, $M$ being compact and oriented, the {\em energy} of $\sigma$ is the integral
\begin{equation}\label{energyf}
 {\mathcal E}(\sigma)= \frac{\textstyle 1}{\textstyle2}\int_{M}\trace L_{\sigma}\;dv_{M},
\end{equation}
where $L_{\sigma}$ is the $(1,1)$-tensor field determined by $(\sigma^{*}h)(X,Y) = g(L_{\sigma}X,Y),$ for all vector fields $X,Y,$ and $dv_{M}$ denotes the volume form on $(M,g).$ (For more information about the energy functional see \cite{Ur}.) We are particularly interested on the energy functional of smooth distributions whose set of singular points $M_{s}= M\setminus M_{r}$ is given by the union
\begin{equation}\label{set}
M_{s} = \{x_{1},\dots ,x_{a}\}\cup \Big(\bigcup_{\beta=1}^{b}P_{\beta}\Big)
\end{equation}
of a finite number of points $x_{1},\dots ,x_{a}$ of $M$ and of pairwise disjoint topologically embedded submanifolds $P_{\beta},$ $\beta=1,\dots ,b,$ with $1\leq \dim P_{\beta}\leq n-1.$ In fact, the main purpose of this paper is not the study of variational problems of this functional, but of the energy itself, of smooth distributions and singular foliations, as a natural extension from the theory about the energy of unit vector fields, with singularities as in (\ref{set}), developed initially by Brito and Walczak in \cite{BW} and, later, by Boeckx, Vanhecke and the author in \cite{BGV}. Note that a unit vector field with singularities determines an {\em oriented} one-dimensional almost regular distribution. 

In terms of $G$-structures, each regular $q^{i}$-dimensional distribution $\sigma^{i},$ $i = 1,\dots ,l,$ co\-rres\-ponds with a (unique) $S(O(q_{i})\times O(n-q_{i}))$-reduction of ${\mathcal S}{\mathcal O}(M_{r}^{i}).$ Then, using the {\em intrinsic torsion} $\xi$ of each one of these $S(O(q_{i})\times O(n-q_{i}))$-structures, the energy ${\mathcal E}(\sigma)$ of $\sigma$ is expressed in Section $3$ as
\[
{\mathcal E}(\sigma) = \frac{n}{2}{\rm Vol}(M,g) + \frac{1}{4}\int_{M}\|\xi\|^{2}dv_{M}.
\]

If $\sigma$ is completely integrable, the connected components of the maximal integral sub\-ma\-ni\-folds are the leaves of a foliation ${\mathcal F} = {\mathcal F}_{\sigma}$ with singularities known as a {\em Stefan foliation} or a {\em singular foliation} \cite{Stefan}. The energy ${\mathcal E}({\mathcal F})$ of ${\mathcal F}$ is defined as the energy $ {\mathcal E}(\sigma)$ of the {\em tangent distribution} $\sigma$ of ${\mathcal F}.$

In Section $4,$ an useful integral formula for almost regular distributions, with finite ener\-gy and $M_{r}$ connected, of its {\em mixed scalar curvature} and of its {\em second mean curvatures} is obtained. This formula, which can be seen as a generalization the one given in \cite{BGV}, see also \cite{BW}, for unit vector fields with singularities, plays a central role for the determination of some lower bounds of the energy of smooth distributions. Thus, in Section $5$ we show that tori are the unique compact oriented surfaces admitting a one-dimensional almost regular distribution with finite energy, and for $n\geq 3,$ we derive a lower bound for the energy in the set of all $q$-dimensional almost regular distributions, for each $q = 1,\dots ,n-1.$ As an application of this last result, we find in Section $6$ some special classes of foliations, as tubular and radial foliations around points or embedded submanifolds, particularly on compact rank one symmetric spaces, and complex radial foliations in Hermitian geometry, that mi\-ni\-mi\-ze the energy functional at least over a sufficiently wide set of smooth distributions. We show (Theorem \ref{Thsph}) that radial and spherical foliations around points in the sphere and also in the real projective space are the {\em unique} absolute minimizers of the energy functional over the set of all one-dimensional and codimension one almost regular distributions, respectively. Moreover, we construct a family of foliations, obtained by deformation of a tubular fo\-lia\-tion, which are not almost regular and have finite energy even when they are on surfaces different from tori.
\section{Intrinsic torsion of smooth distributions}\indent
A {\em distribution} \cite{Suss} on a differentiable manifold $M$ is a mapping $\sigma$ which assigns to every $x\in M$ a linear subspace $\sigma(x)$ of the tangent space $T_{x}M.$ The subspaces $\sigma(x)$ may have different dimensions. Denote by ${\mathfrak X}_{loc}(M)$ the set of all $C^{\infty}$ vector fields defined on open subsets of $M.$ A vector field $X\in {\mathfrak X}_{loc}(M)$ is said that {\em belongs} to the distribution $\sigma,$ we shall put $X\in \sigma,$ if $X_{x}\in \sigma(x)$ for every $x$ in the domain of $X$ and a subset $D\subset {\mathfrak X}_{loc}(M)$ is said to {\em span} $\sigma$ if, for every $x\in M,$ $\sigma(x)$ is the linear hull of vectors $X_{x},$ where $X\in D.$ The distribution $\sigma$ is then called a {\em smooth} or $C^{\infty}$ {\em distribution}. In particular, any finite collection of vector fields determines a smooth distribution.

A point $x\in M$ will be a {\em regular point} if $x$ is a local maximum of $d$ or, equivalently, $d$ is constant on an open neighborhood of $x.$ All the other points of $M$ will be called {\em singular points} of $\sigma.$ Then $M = M_{r}\cup M_{s},$ where $M_{r}$ and $M_{s}$ denote the set of the regular and singular points of $\sigma,$ respectively. $M_{r}$ is obviously an open dense subset of $M.$

Let $\pi_{SO(n)}: {\mathcal S}{\mathcal O}(M_{r}) \to M_{r}$ be the principal $SO(n)$-bundle of oriented orthonormal frames of $(M_{r},g_{M_{r}}),$ consisting on the pairs $p=(x;p_{1},\dots ,p_{n})$ where $x\in M_{r}$ and $\{p_{1},\dots ,p_{n}\}$ is an oriented and orthonormal basis of $(T_{x}M_{r},g_{M_{r}}).$ Then, ${\mathcal S}{\mathcal O}(M_{r}) = \bigcup_{i=1}^{l}{\mathcal S}{\mathcal O}(M_{r}^{i})$ and $G_{q_{i}}(M_{r}^{i}),$ for each $i = 1,\dots ,l,$ can be identified with the orbit space ${\mathcal S}{\mathcal O}(M_{r}^{i})/S(O(q_{i})\times O(n-q_{i})),$ via the mapping $p\cdot S(O(q_{i})\times O(n-q_{i}))\mapsto \RR\{p_{1},\dots ,p_{q_{i}}\}.$ So $G_{q_{i}}(M_{r}^{i})$ is a {\em homogeneous fibre bundle} \cite{Wood} with fibre type the real Grassmannian manifold $G_{q_{i}}(\RR^{n}) = SO(n)/S(O(q_{i})\times O(n-q_{i}))$ of the unoriented $q_{i}$-subspaces of $\RR^{n}.$

Let $\rho: {\mathcal S}{\mathcal O}(M_{r})\to G(M_{r})$ be the map whose restriction to each ${\mathcal S}{\mathcal O}(M_{r}^{i})$ is the orbit map $p\mapsto p\cdot S(O(q_{i}\times O(n-q_{i})).$ Then $\pi_{{\mathcal S}{\mathcal O}(n)} = \pi\comp \rho$ and each section in $\Gamma^{\infty}(G_{q_{i}}M_{r}^{i})$ determines a reduction ${\mathcal S}{\mathcal O}_{\sigma^{i}}(M_{r}^{i})$ to $S(O(q_{i})\times O(n-q_{i}))$ of ${\mathcal S}{\mathcal O}(M_{r}^{i})$ and conversely. 

Hence, there is a one-to-one correspondence between the set of $S(O(q_{i})\times O(n-q_{i}))$-structures and the manifold $\Gamma^{\infty}(G_{q_{i}}(M_{r}^{i}))$ of all $q_{i}$-dimensional distributions of $(M_{r}^{i},g_{M_{r}^{i}}).$ Moreover, the pair $({\mathcal V}^{i}= \sigma^{i},{\mathcal H}^{i}=(\sigma^{i})^{\bot}),$ $i = 1,\dots ,l,$ where $(\sigma^{i})^{\bot}$ is the orthogonal distribution of $\sigma^{i}$ on $(M_{r}^{i},g_{M_{r}^{i}}),$ de\-ter\-mi\-nes a {\em Riemannian almost-product $(AP)$ structure}, i.e., an orthogonal $(1,1)$-tensor field $P^{i}$ on $(M_{r}^{i},g_{M_{r}^{i}})$ with $(P^{i})^{2} = {\rm Id}$ and $P^{i}\neq \pm {\rm Id}.$ The {\em vertical} and {\em horizontal} distributions ${\mathcal V}^{i}$ and ${\mathcal H}^{i}$ are the corresponding $\pm 1$-eigendistributions of $P^{i}.$

Denote by ${\mathfrak s}{\mathfrak o}(M_{r}^{i})_{\sigma^{i}}$ the subbundle of ${\mathfrak s}{\mathfrak o}(M^{i}_{r})$ of the skew-symmetric endomorphisms $A$ of $TM_{r}^{i}$ such that $AP^{i} = -P^{i}A.$ Following \cite{GDHarm} (see also \cite{GDM}), the {\em minimal connection} of $\sigma^{i},$ considered as a $S(O(q_{i})\times O(n-q_{i}))$-structure, is the unique $S(O(q_{i})\times O(n-q_{i}))$-connection $\nabla^{\sigma^{i}} = \nabla- \xi^{i}$ on $M_{r}^{i},$ where $\nabla$ is the Levi-Civita connection of $(M,g)$ and $\xi^{i},$ known as the {\em intrinsic torsion} of $\sigma^{i},$ is an element of $T^{*}M^{i}_{r}\otimes {\mathfrak s}{\mathfrak o}(M_{r}^{i})_{\sigma^{i}}.$ Then $\nabla^{\sigma^{i}}$ coincides with the {\em Schouten connection} of the $AP$ structure and $\xi^{i}$ is given by
\begin{equation}\label{intrixi}
(\xi^{i})_{X}Y = -\frac{1}{2}P^{i}(\nabla_{X}P^{i})Y,\;\;\;X,Y\in {\mathfrak X}(M_{r}^{i}).
\end{equation}

Let $({\mathcal V},{\mathcal H})$ be the complementary (smooth) distributions on $M_{r}$ obtained taking the pairs $({\mathcal V}^{i},{\mathcal H}^{i})$ on each $M_{r}^{i},$ for $i\in \{1,\dots ,l\},$ and let $p_{\mathcal V}= \frac{1}{2}({\rm Id} + P): TM_{r} \to {\mathcal V}$ and $p_{\mathcal H} = \frac{1}{2}({\rm Id} - P) : TM_{r} \to {\mathcal H}$ be the canonical projections, where $P$ denotes the $(1,1)$-tensor field on $M_{r}$ associated to $({\mathcal V},{\mathcal H}).$ The $(1,2)$-tensor field $\xi$ on $M_{r},$ defined by $\xi(x) = \xi^{i}(x)$ if $x\in M_{r}^{i},$ is called the {\em intrinsic torsion} of $\sigma.$ Then $\xi_{X},$ for all $X\in {\mathfrak X}(M_{r}),$ is a global section of the vector bundle
\[
{\mathfrak s}{\mathfrak o}(M_{r})_{\sigma_{r}}= \bigcup_{i=1}^{l} {\mathfrak s}{\mathfrak o}(M_{r}^{i})_{\sigma^{i}}= \{A\in {\mathfrak s}{\mathfrak o}(M_{r})\;\mid \; AP = -PA\}.
\]
Moreover, from (\ref{intrixi}),  $\xi$ is determined by
\begin{equation}\label{xicoor}
\begin{array}{lclclcl}
\xi_{U}V & = & p_{\mathcal H}(\nabla_{U}V), & & \xi_{U}X & = & p_{\mathcal V}(\nabla_{U}X),\\[0.4pc]
\xi_{X}U & = & p_{\mathcal H}(\nabla_{X}U), & & \xi_{X}Y & = & p_{\mathcal V}(\nabla_{X}Y).
\end{array}
\end{equation}
\noindent Here and in what follows $U$ and $V$ (resp., $X$ and $Y)$ denote local vector fields of ${\mathcal V}$ (resp., of ${\mathcal H}).$

A smooth distribution $\sigma$ is said to be {\em involutive} if $[\sigma,\sigma]\subset \sigma$ and {\em completely integrable} if for every $x\in M$ there exists an integral submanifold $L$ of $\sigma$ such that $x\in L.$ It follows that if $\sigma$ is completely integrable then it must be involutive. The converse may not hold for the non-regular case (see \cite{Suss}). The involutive condition is equivalent to be completely integrable the maximal regular distribution $\sigma_{r}.$

The {\em second fundamental forms} (symmetric tensors) $h_{\mathcal V}:{\mathcal V}\times{\mathcal V}\to {\mathcal H},$ $h_{\mathcal H}:{\mathcal H}\times{\mathcal H}\to {\mathcal V}$ and the {\em integrability tensors} (skew-symmetric tensors) $A_{\mathcal V}: {\mathcal V}\times {\mathcal V}\to {\mathcal H},$ $A_{\mathcal H}:{\mathcal H}\times {\mathcal H}\to {\mathcal V}$ of $\sigma_{r}$ are defined in terms of $\xi$ by the following formulas:
$$
\begin{array}{lclclcl}
h_{\mathcal V}(U,V) & = & \frac{1}{2}(\xi_{U}V + \xi_{V}U), & A_{\mathcal V}(U,V) & = & \frac{1}{2}(\xi_{U}V -\xi_{V}U),\\[0.4pc]
h_{\mathcal H}(X,Y) & = & \frac{1}{2}(\xi_{X}Y + \xi_{Y}X), & A_{\mathcal H}(X,Y) & = & \frac{1}{2}(\xi_{X}Y -\xi_{Y}X).
\end{array}
$$
\noindent Hence, $A_{\mathcal V}(U,V) = \frac{1}{2}p_{\mathcal H}[U,V]$ and $A_{\mathcal H}(X,Y) = \frac{1}{2}p_{\mathcal V}[X,Y]$ and so $\sigma_{r}$ (resp., the orthogonal distribution $\sigma_{r}^{\bot}$ of $\sigma_{r})$ is completely integrable if and only if $A_{\mathcal V} = 0$ (resp., $A_{\mathcal H} = 0).$ The distribution $\sigma_{r}$ (resp., $\sigma_{r}^{\bot})$ is said to be {\em geodesic} if $h_{\mathcal V} = 0$ (resp., $h_{\mathcal H} = 0).$ It means that all geodesics on $M_{r}$ with initial vector in ${\mathcal V}$ (resp., ${\mathcal H})$ remain in ${\mathcal V}$ (resp., ${\mathcal H})$ for all time. The distribution $\sigma_{r}$ (resp., $\sigma_{r}^{\bot})$ is said to be {\em Riemannian} if $\sigma_{r}$ (resp., $\sigma_{r}^{\bot})$ is integrable and $\sigma_{r}^{\bot}$ (resp., $\sigma_{r})$ is geodesic. If moreover, $\sigma_{r}^{\bot}$ (resp., $\sigma_{r})$ is integrable, we say that $\sigma_{r}$ (resp., $\sigma_{r}^{\bot})$ is a {\em polar distribution}.

Consider the {\em mean curvature vector fields} $H_{\mathcal V}\in {\mathcal H}$ and $H_{\mathcal H}\in {\mathcal V}$ of $\sigma_{r}$ given by $H_{\mathcal V} =\trace h_{\mathcal V}$ and $H_{\mathcal H} = \trace h_{\mathcal H}.$ Then they are locally expressed on each $M_{r}^{i}$ as
\[
H_{\mathcal V} = \sum_{a=1}^{q_{i}}\xi_{E_{a}}E_{a},\;\;\;\; H_{\mathcal H} = \sum_{j=1}^{n-q_{i}}\xi_{E_{q_{i}+j}}E_{q_{i}+j},
\]
where $\{E_{1},\dots ,E_{q_{i}}; E_{q_{i}+1},\dots ,E_{n}\}$ is a local orthonormal frame on $(M_{r}^{i},g_{M_{r}^{i}})$ adapted to $({\mathcal V}^{i},{\mathcal H}^{i}).$ If $H_{\mathcal V}$ (resp., $H_{\mathcal H})$ vanishes the distribution $\sigma_{r}$ (resp., $\sigma_{r}^{\bot})$ is said to be {\em minimal}.

 In the sequel, the following convention for indices is used:  $a,b,\dots \in \{1,\dots ,q_{i}\}$ and $j,k,\dots \in \{1,\dots ,n-q_{i}\},$ for $i\in \{1,\dots ,l\}.$

The {\em second mean curvatures} $\mu_{\mathcal V}$ and $\mu_{\mathcal H}$ are locally defined as
\[
\mu_{{\mathcal V}}  =   \sum_{j}\sum_{a<b}(\xi_{aa}^{j}\xi_{bb}^{j}-\xi_{ab}^{j}\xi_{ba}^{j}),\;\;\;\; \mu_{{\mathcal H}}  =  \sum_{a}\sum_{j<k}(\xi_{jj}^{a}\xi_{kk}^{a}-\xi_{jk}^{a}\xi_{kj}^{a}),
\]
where $\xi_{ab}^{j} = g(\xi_{E_{a}}E_{b},E_{q+j})$ and $\xi_{jk}^{a} = g(\xi_{E_{q+j}}E_{q+k},E_{a}).$ Then,
\begin{equation}\label{mu}
2\mu_{{\mathcal V}} = \|H_{\mathcal V}\|^{2} + \|A_{\mathcal V}\|^{2} - \|h_{\mathcal V}\|^{2},\;\;\;2\mu_{{\mathcal H}} = \|H_{\mathcal H}\|^{2} + \|A_{\mathcal H}\|^{2} - \|h_{\mathcal H}\|^{2}.
\end{equation}

The maximal regular distribution $\sigma_{r}$ (resp., $\sigma_{r}^{\bot})$ is said to be {\em umbilical} if each $\sigma^{i}$ (resp., $(\sigma^{i})^{\bot})$ is umbilical, that is, if  $h_{{\mathcal V}^{i}} = \frac{g_{M_{r}^{i}}(\cdot,\cdot)}{q_{i}}H_{{\mathcal V}^{i}},$ (resp., $h_{{\mathcal H}^{i}} = \frac{g_{M_{r}^{i}}(\cdot,\cdot)}{n-q_{i}}H_{{\mathcal H}^{i}}),$ or equivalently $\xi_{ab}^{j} = -\xi_{ba}^{j}$ and $\xi^{j}_{aa} = \xi^{j}_{bb},$ for all $a\neq b$ (resp., $\xi_{jk}^{a} = -\xi_{kj}^{a}$ and $\xi^{a}_{jj} = \xi^{a}_{kk},$ for all $j\neq k).$
\begin{remark}\label{Rumbilical} {\rm If $\sigma_{r}$ (resp., $\sigma_{r}^{\bot})$ is umbilical then $\mu_{{\mathcal V}}\geq 0$ (resp., $\mu_{{\mathcal H}}\geq 0).$ Moreover, $\mu_{{\mathcal V}^{i}}=0$ (resp., $\mu_{{\mathcal H}^{i}}=0)$ if and only if $\sigma^{i}$ (resp., $(\sigma^{i})^{\bot})$ is integrable and geodesic.}
\end{remark}
\section{Energy of smooth distributions}\indent
 The Kaluza Klein metric $g^{K}$ relative to $(g_{M_{r}},\langle\cdot,\cdot\rangle)$ on the Grassmannian bundle $G(M_{r}),$ where $\langle\cdot,\cdot\rangle$ is the inner product $\langle X,Y\rangle = -\frac{1}{2}\trace XY$ on ${\mathfrak s}{\mathfrak o}(n),$ is defined as follows (see \cite{GDHarm}, \cite{GDM} and \cite{Wood} for more information): Let $TG(M_{r}) = {\rm V}\oplus {\rm H},$ where
\[
{\rm V} = {\rm Ker}\;\pi_{*} = \rho_{*}({\rm Ker}(\pi_{SO(n)})_{*}),\;\;\;\; {\rm H} = \rho_{*}({\rm Ker}\;\omega)
\]
and $\omega$ is the ${\mathfrak s}{\mathfrak o}(n)$-valued connection form of the Levi-Civita connection on ${\mathcal S}{\mathcal O}(M_{r}).$ Because $\rho_{*p}B^{*}_{p} = 0,$ for all $p\in {\mathcal S}{\mathcal O}(M_{r}^{i})$ and $B\in {\mathfrak s}{\mathfrak o}(q_{i})\oplus{\mathfrak s}{\mathfrak o}(n-q_{i}),$ the elements of ${\rm V}_{\rho(p)}$ may be written as $\rho_{*p}A^{*}_{p},$ for some $A$ belonging to the $\langle\cdot,\cdot\rangle$-orthogonal complement  ${\mathfrak m}_{i}$ of ${\mathfrak s}{\mathfrak o}(q_{i})\oplus{\mathfrak s}{\mathfrak o}(n-q_{i})$ in ${\mathfrak s}{\mathfrak o}(n),$ $A^{*}$ being its fundamental vector field on ${\mathcal S}{\mathcal O}(M_{r}).$ Consider the vector bundle ${\mathcal S}{\mathcal O}(M_{r})_{\rho}$ given by the union 
\[
{\mathcal S}{\mathcal O}(M_{r})_{\rho} = \bigcup_{i=1}^{l}({\mathcal S}{\mathcal O}(M_{r}^{i})\times_{S(O(q_{i})\times SO(n-q_{i}))}{\mathfrak m}_{i})
\]
of associated bundles to $\rho_{\mid {\mathcal S}{\mathcal O}(M^{i}_{r})}.$ The map $\iota:{\rm V}\to {\mathcal S}{\mathcal O}(M_{r})_{\rho},$ $\rho_{*p}A^{*}_{p}\mapsto [(p,A)],$ is a vector bundle isomorphism and it may be extended to a type of {\em connection map} ${\mathcal K}: TG(M_{r})\to {\mathcal S}{\mathcal O}(M_{r})_{\rho}$ by saying that ${\mathcal K}(\eta) = 0,$ for all $\eta\in {\rm H},$ and ${\mathcal K}(\eta) = \iota(\eta)$ if $\eta\in {\rm V}.$ The {\em Kaluza-Klein metric} $g^K$ on $G(M_{r})$ is then determined by
\begin{equation}\label{Kaluza}
g^{K}(\eta_{1},\eta_{2}) = g_{M_{r}}(\pi_{*}\eta_{1},\pi_{*}\eta_{2}) + \langle {\mathcal K}(\eta_{1}),{\mathcal K}(\eta_{2})\rangle,
\end{equation}
where $\langle\cdot,\cdot\rangle$ also denotes the fibre metric induced by $\langle\cdot,\cdot\rangle$ restricted to each ${\mathfrak m}_{i}.$ Given a smooth distribution $\sigma$ on $M,$ the pullback bundle $\pi^{*}{\mathfrak s}{\mathfrak o}(M_{r})_{\sigma_{r}}$ by $\pi$ of ${\mathfrak s}{\mathfrak o}(M_{r})_{\sigma_{r}}$ is isomorphic to ${\mathcal S}{\mathcal O}(M_{r})_{\rho}$ (see \cite[Remark 3.2]{GDM}) and a nice property (see proof of \cite[Theorem 3.3]{GDM}) relating ${\mathcal K}$ with the intrinsic torsion establishes that
\[
{\mathcal K}(\sigma_{r*x}X_{x}) = (\sigma_{r}(x),\xi_{X_{x}})\in \pi^{*}{\mathfrak s}{\mathfrak o}(M_{r})_{\sigma_{r}}.
\]
Then, using (\ref{Kaluza}), the pull-back metric $\sigma_{r}^{*}g^{K}$ on $M_{r}$ is given by
\[
(\sigma_{r}^{*}g^{K})(X,Y) = g_{M_{r}}(X,Y) - \frac{1}{2} \trace \xi_{X}\comp \xi_{Y},\;\;\;\; X,Y\in {\mathfrak X}(M_{r}).
\]
Hence, the tensor field $L_{\sigma_{r}}$ on $M_{r}$ can be expressed as $L_{\sigma_{r}} = {\rm Id} + \frac{1}{2}\xi^{t}\comp \xi,$ where $\xi$ is considered as the map $\xi:{\mathfrak X}(M_{r})\to \Gamma^{\infty}({\mathfrak s}{\mathfrak o}(M_{r})_{\sigma_{r}}),$ $\xi(X) = \xi_{X}$ for all $X\in {\mathfrak X}(M_{r}),$ and $\xi^{t}:\Gamma^{\infty}({\mathfrak s}{\mathfrak o}(M_{r})_{\sigma_{r}})\to {\mathfrak X}(M_{r})$ is the adjoint operator of $\xi$ with respect to $g_{M_{r}},$ that is, $g_{M_{r}}(\xi^{t}(\varphi),X) = g_{M_{r}}(\varphi,\xi(X))$ for all $\varphi\in \Gamma^{\infty}({\mathfrak s}{\mathfrak o}(M_{r})_{\sigma_{r}}).$ Then, 
\begin{equation}\label{trace}
\trace L_{\sigma_{r}} = n + \frac{1}{2}\|\xi\|^{2}.
\end{equation} 
Given the set of singular points $M_{s}$ of $\sigma$ as in (\ref{set}), we put $\vec{\varepsilon} = (\varepsilon_{1},\dots ,\varepsilon_{a}; \bar{\varepsilon}_{1},\dots ,\bar{\varepsilon}_{b})\in \RR^{a+b},$ where $\varepsilon_{\alpha},$ $\bar{\varepsilon}_{\beta}>0$ and $\alpha=1,\dots ,a;$ $\beta = 1,\dots ,b.$ Then, for sufficiently small $\varepsilon_{\alpha},$ $\bar{\varepsilon}_{\beta}>0,$ the subset $M(M_{s},\vec{\varepsilon})$ of $M_{r}$ defined as
\[
M(M_{s},\vec{\varepsilon}) = M\setminus \Big(\bigcup_{\alpha=1}^{a}B(x_{\alpha},\varepsilon_{\alpha})\cup \bigcup_{\beta=1}^{b}T(P_{\beta},\bar{\varepsilon}_{\beta})\Big),
\]
where $B(x_{\alpha},\varepsilon_{\alpha})$ denotes the geodesic ball of radius $\varepsilon_{\alpha}$ and center at $x_{\alpha}$ and $T(P_{\beta},\bar{\varepsilon}_{\beta}),$ the tube of radius $\bar{\varepsilon}_{\beta}$ about $P_{\beta},$ is an $n$-dimensional smooth compact manifold. Its boundary $\partial M(M_{s},\vec{\varepsilon})$ is given by the disjoint union
\begin{equation}\label{borde}
\partial M(M_{s},\vec{\varepsilon}) = \bigcup_{\alpha=1}^{a}S(x_{\alpha},\varepsilon_{\alpha})\cup \bigcup_{\beta=1}^{b}P_{\beta}(\bar{\varepsilon}_{\beta}),
\end{equation}
where $S(x_{\alpha},\varepsilon_{\alpha})$ is the geodesic sphere on $M$ of radius $\varepsilon_{\alpha}$ and center at $x_{\alpha}$ and $P_{\beta}(\bar{\varepsilon}_{\beta})$ is the tubular hypersurface about $P_{\beta}$ at a distance $\bar{\varepsilon}_{\beta}$ from $P_{\beta}.$ 

The {\em energy} ${\mathcal E}(\sigma)$ of the distribution $\sigma$ on $M$  is defined as the limit
\[
{\mathcal E}(\sigma) = \lim_{\vec{\varepsilon}\to \vec{0}}{\mathcal E}_{M(M_{s},\vec{\varepsilon})}(\sigma_{r}),
\]
where ${\mathcal E}_{M(M_{s},\vec{\varepsilon})}(\sigma_{r})$ is the energy of the mapping $\sigma_{r}\comp \iota:(M(M_{s},\vec{\varepsilon}), \iota^{*} g_{M_{r}})\to (G(M_{r}),g^{K}),$ $\iota$ being the inclusion of $M(M_{s},\vec{\varepsilon})$ into $M_{r}.$ Note that ${\mathcal E}(\sigma)$ may be infinite. From (\ref{energyf}) and (\ref{trace}), we have
\[
{\mathcal E}_{M(M_{s},\vec{\varepsilon})}(\sigma_{r}) = \frac{n}{2}{\rm Vol}(M(M_{s},\vec{\varepsilon})) + \frac{1}{4}\int_{M(M_{s},\vec{\varepsilon})}\|\xi\|^{2}dv_{M(M_{s},\vec{\varepsilon})}.
\]
For an arbitrary almost continuous function $f:M_{r}\to \RR$ on $M_{r},$ we establishes that
\[
\int_{M}fdv_{M} = \lim_{\vec{\varepsilon}\to \vec{0}}\int_{M(M_{s},\vec{\varepsilon})}f dv_{M(M_{s},\vec{\varepsilon})},
\]
if the limit exists. Therefore, the energy ${\mathcal E}(\sigma)$ of $\sigma$ takes the form
\[
{\mathcal E}(\sigma) = \frac{n}{2}{\rm Vol}(M,g) + \frac{1}{4}\int_{M}\|\xi\|^{2}dv_{M}.
\]
The relevant part of this formula,
$$
B(\sigma) = \frac{1}{4}\int_{M}\|\xi\|^{2}dv_{M},
$$
will be called the {\em total bending} of the distribution. Because $\|\xi\|^{2}\geq 0,$ $B(\sigma)$ is well-defined (may be infinite) and it is zero if and only if $\xi$ vanishes on whole $M_{r}$ or, equivalently, the distributions $\sigma_{r}$ and $\sigma_{r}^{\bot}$ are both geodesic and integrable. Note that ${\mathcal E}_{M(M_{s},\vec{\varepsilon})}(\sigma_{r}) = {\mathcal E}_{M(M_{s},\vec{\varepsilon})}(\sigma_{r}^{\bot})$ and so ${\mathcal E}(\sigma)$ can be also given by ${\mathcal E}(\sigma)= \lim_{\vec{\varepsilon}\to \vec{0}}{\mathcal E}_{M(M_{s},\vec{\varepsilon})}(\sigma_{r}^{\bot}).$

Let $\Sigma_{\mathcal V}$ and $\Sigma_{\mathcal H}$ be the smooth functions on $M_{r},$ locally defined on the domain in $M_{r}^{i},$ for $i = 1,\dots ,l,$ of a $({\mathcal V}_{i},{\mathcal H}_{i})$-adapted frame $\{E_{a};E_{q_{i}+j}\},$ as
\[
\Sigma_{\mathcal V} = \sum_{a,b; j}(\xi_{ab}^{j})^{2},\;\;\;\Sigma_{\mathcal H} = \sum_{a;jk}(\xi_{jk}^{a})^{2}.
\]
Then, from (\ref{xicoor}), we have $\|\xi\|^{2} = 2(\Sigma_{\mathcal V} + \Sigma_{\mathcal H}).$ Hence, 
\begin{equation}\label{Bendingnew}
B(\sigma) = \sum_{i}(B^{\mathcal V}(\sigma^{i}) + B^{\mathcal H}(\sigma^{i})),
\end{equation}
where,
\[
B^{\mathcal V}(\sigma^{i}) = \frac{1}{2}\int_{\bar{M_{r}^{i}}}\Sigma_{\mathcal V} dv_{\bar{M_{r}^{i}}},\;\;\;\;\;\;B^{\mathcal H}(\sigma^{i}) = \frac{1}{2}\int_{\bar{M_{r}^{i}}}\Sigma_{\mathcal H} dv_{\bar{M_{r}^{i}}}
\]
and $\bar{M_{r}^{i}}$ is the closure of $M_{r}^{i}.$

\begin{lemma}\label{Sigma} If $q_{i}\geq 2,$ then
\begin{equation}\label{Sigma1}
B^{\mathcal V}(\sigma^{i})\geq \frac{1}{q_{i}-1}\int_{\bar{M}^{i}_{r}}\mu_{{\mathcal V}^{i}}\;dv_{\bar{M}_{r}^{i}}
\end{equation}
and, if $n-q_{i}\geq 2,$ then
\begin{equation}\label{Sigma2}
B^{\mathcal H}(\sigma^{i}) \geq\frac{1}{n-q_{i}-1}\int_{\bar{M}_{r}^{i}}\mu_{{\mathcal H}^{i}}\;dv_{\bar{M}_{r}^{i}}.
\end{equation}
Moreover, the equality in {\rm (\ref{Sigma1})} {\rm (}resp., in {\rm (\ref{Sigma2})}{\rm )} holds if and only if
\begin{enumerate}
\item[{\rm (a)}] for $q_{i} = 2$ {\rm (}resp., $n-q_{i} = 2),$ $\sigma^{i}$ {\rm (}resp., $(\sigma^{i})^{\bot})$ is umbilical;
\item[{\rm (b)}] for $q_{i}\geq 3$ {\rm (}resp., $n-q_{i}\geq 3),$ $\sigma^{i}$ {\rm (}resp., $(\sigma^{i})^{\bot})$ is umbilical and integrable.
\end{enumerate}
\end{lemma}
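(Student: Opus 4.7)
The plan is to reduce the inequality to a pointwise algebraic inequality at each point of $M_r^i$ and then apply a Cauchy--Schwarz estimate. I will carry out the argument for $B^{\mathcal V}(\sigma^i)$; the case of $B^{\mathcal H}(\sigma^i)$ is formally identical after swapping the roles of $(\mathcal V, q_i)$ and $(\mathcal H, n-q_i)$.

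First I split $\xi_{ab}^{j}$ into its symmetric and skew parts in $(a,b)$, which are exactly the components of $h_{\mathcal V}$ and $A_{\mathcal V}$ respectively. Using that the symmetric and skew parts of a real matrix are orthogonal with respect to the Frobenius inner product, and that $\xi_{aa}^{j}$ contributes only to the symmetric part, I obtain
\[
\Sigma_{\mathcal V}=\sum_{j,a,b}(\xi_{ab}^{j})^{2}=\|h_{\mathcal V}\|^{2}+\|A_{\mathcal V}\|^{2},
\]
where the norms are pointwise Frobenius norms. Combined with the identity $2\mu_{\mathcal V}=\|H_{\mathcal V}\|^{2}+\|A_{\mathcal V}\|^{2}-\|h_{\mathcal V}\|^{2}$ from (\ref{mu}), the desired integral inequality (\ref{Sigma1}) will follow from the pointwise inequality
\[
\tfrac{1}{2}\Sigma_{\mathcal V}-\tfrac{1}{q_{i}-1}\mu_{\mathcal V}=\tfrac{1}{2(q_{i}-1)}\bigl(q_{i}\|h_{\mathcal V}\|^{2}-\|H_{\mathcal V}\|^{2}+(q_{i}-2)\|A_{\mathcal V}\|^{2}\bigr)\geq 0.
\]

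The remaining point is the estimate $\|H_{\mathcal V}\|^{2}\leq q_{i}\|h_{\mathcal V}\|^{2}$, which is really the main (and only) analytic ingredient. Fixing $j$, apply Cauchy--Schwarz to the diagonal entries of the symmetric $q_{i}\times q_{i}$ matrix $(h_{\mathcal V}^{j}(E_{a},E_{b}))$:
\[
(H_{\mathcal V}^{j})^{2}=\Bigl(\sum_{a}\xi_{aa}^{j}\Bigr)^{2}\leq q_{i}\sum_{a}(\xi_{aa}^{j})^{2}\leq q_{i}\sum_{a,b}h_{\mathcal V}^{j}(E_{a},E_{b})^{2},
\]
and sum over $j$. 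Together with $(q_{i}-2)\|A_{\mathcal V}\|^{2}\geq 0$ (which holds trivially for $q_{i}\geq 2$), this establishes the pointwise inequality, and integrating over $\overline{M_r^{i}}$ yields (\ref{Sigma1}).

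Finally, for the equality discussion, I trace back the two uses of Cauchy--Schwarz and the nonnegative term. Equality in $(\sum_{a}\xi_{aa}^{j})^{2}\leq q_{i}\sum_{a}(\xi_{aa}^{j})^{2}$ forces the diagonal entries to be independent of $a$, i.e.\ $\xi_{aa}^{j}=\xi_{bb}^{j}$; equality in $\sum_{a}(\xi_{aa}^{j})^{2}\leq \sum_{a,b}h_{\mathcal V}^{j}(E_{a},E_{b})^{2}$ forces the off-diagonal symmetric components to vanish, which combined with $h_{\mathcal V}^{j}(E_{a},E_{b})=\tfrac{1}{2}(\xi_{ab}^{j}+\xi_{ba}^{j})$ gives $\xi_{ab}^{j}=-\xi_{ba}^{j}$ for $a\neq b$. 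By the characterization of umbilicity stated just before the lemma, these two conditions together are exactly the umbilical condition for $\sigma^{i}$. The additional term $(q_{i}-2)\|A_{\mathcal V}\|^{2}$ contributes no constraint when $q_{i}=2$, but demands $A_{\mathcal V}=0$ (i.e.\ integrability of $\sigma^{i}$) when $q_{i}\geq 3$, giving exactly (a) and (b). The case (\ref{Sigma2}) is identical with the $\mathcal V\leftrightarrow\mathcal H$, $q_{i}\leftrightarrow n-q_{i}$ substitution, and I do not expect any real obstacle beyond bookkeeping the indices carefully.
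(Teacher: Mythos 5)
Your proof is correct and is essentially the same argument as the paper's: both come down to the same pointwise algebraic identity for $(q_i-1)\Sigma_{\mathcal V}-2\mu_{\mathcal V}$ as a sum of manifestly nonnegative squares, with the equality conditions read off from the vanishing of those squares. The paper writes this identity directly in the components $\xi^j_{ab}$ (summing $\sum_{a<b}(\xi^j_{aa}-\xi^j_{bb})^2$ and $\sum_{a<b}(\xi^j_{ab}+\xi^j_{ba})^2$), while you repackage the same computation through the orthogonal decomposition $\Sigma_{\mathcal V}=\|h_{\mathcal V}\|^2+\|A_{\mathcal V}\|^2$, formula (\ref{mu}), and the trace inequality $\|H_{\mathcal V}\|^2\leq q_i\|h_{\mathcal V}\|^2$ --- an equivalent, slightly more invariant presentation.
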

\begin{proof} For each $j=1,\dots ,n-q_{i},$ observe that
$$
\begin{array}{lcl}
\sum_{a<b}(\xi^{j}_{aa} - \xi_{bb}^{j})^{2} & = & (q_{i}-1)\sum_{a}(\xi_{aa}^{j})^{2} - 2\sum_{a<b}\xi_{aa}^{j}\xi_{bb}^{j},\\[0.5pc]
\sum_{a<b}(\xi_{ab}^{j} + \xi_{ba}^{j})^{2} & = & \sum_{a\neq b}(\xi_{ab}^{j})^{2} + 2\sum_{a<b}\xi_{ab}^{j}\xi_{ba}^{j}.
\end{array}
$$
Then, summing these two equations and using that
$$
\sum_{a,b}(\xi_{ab}^{j})^{2} = \sum_{a}(\xi_{aa}^{j})^{2} + \sum_{a\neq b}(\xi_{ab}^{j})^{2},
$$
we obtain, for $q_{i}\geq 2,$ that
$$
\begin{array}{lcl}
\displaystyle\sum_{a,b}(\xi_{ab}^{j})^{2} & = & \frac{1}{q_{i}-1}\Big \{ \displaystyle\sum_{a<b}(\xi_{aa}^{j} - \xi_{bb}^{j})^{2} + \sum_{a<b}(\xi_{ab}^{j} + \xi_{ba}^{j})^{2}\\[0.4pc]
 & & \hspace{0.2cm} + (q_{i}-2)\displaystyle\sum_{a\neq b}(\xi_{ab}^{j})^{2} + 2\sum_{a<b}(\xi_{aa}^{j}\xi_{bb}^{j} - \xi_{ab}^{j}\xi_{ba}^{j})\Big \}.
  \end{array}
 $$
In similar way, for $n-q_{i} \geq 2$ and for each $a=1,\dots ,q_{i},$
$$
\begin{array}{lcl}
\displaystyle\sum_{j,k}(\xi_{jk}^{a})^{2} & = & \frac{1}{n-q_{i}-1}\Big \{ \displaystyle\sum_{j<k}(\xi_{jj}^{a} - \xi_{kk}^{a})^{2} + \sum_{j<k}(\xi_{jk}^{a} + \xi_{kj}^{a})^{2}\\[0.4pc]
 & & \hspace{0.2cm} + (n-q_{i}-2)\displaystyle\sum_{j\neq k}(\xi_{jk}^{a})^{2} + 2\sum_{j<k}(\xi_{jj}^{a}\xi_{kk}^{a} - \xi_{jk}^{a}\xi_{kj}^{a})\Big \}.
 \end{array}
$$
Then, we get the lemma.
\end{proof}
Hence, using (\ref{Bendingnew}), we prove the following result.
\begin{proposition}\label{pdesigual} If $n\geq 3,$ then
\begin{equation}\label{Bendingeq}
B(\sigma) \geq \sum_{i = 1}^{l}\int_{\bar{M}_{r}^{i}}\Big ( \frac{\mu_{{\mathcal V}^{i}}}{q_{i} -1} + \frac{\mu_{{\mathcal H}^{i}}}{n-q_{i} -1}\Big ) dv_{{\bar M}_{r}^{i}}.
\end{equation}
The equality holds if and only if the distributions $\sigma^{i}$ {\rm (}resp., $(\sigma^{i})^{\bot}),$ for $i = 1,\dots ,l,$ are:
\begin{enumerate}
\item[{\rm (i)}] geodesic, if $q_{i} = 1$ {\rm (}resp., $q_{i} = n-1);$
\item[{\rm (ii)}] umbilical, if $q_{i} = 2$ {\rm (}resp., $q_{i} = n-2);$
\item[{\rm (iii)}] umbilical and integrable, if $3\leq q_{i}\leq n-1$ {\rm (}resp., $n-q_{i}\geq 3).$
\end{enumerate}
\end{proposition}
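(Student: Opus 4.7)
The plan is to combine the decomposition \eqref{Bendingnew} of $B(\sigma)$ with Lemma \ref{Sigma}, applied componentwise on each $M^i_r$. For each $i\in\{1,\dots,l\}$ I will bound $B^{\mathcal V}(\sigma^i)$ and $B^{\mathcal H}(\sigma^i)$ separately from below. When $q_i\geq 2$ inequality \eqref{Sigma1} applies directly, and when $n-q_i\geq 2$, \eqref{Sigma2} applies. The two endpoint cases $q_i=1$ and $n-q_i=1$, which are not covered by Lemma \ref{Sigma}, have to be treated by hand---this is really the only point requiring care, and the hypothesis $n\geq 3$ is used precisely here since it ensures $q_i=1$ and $n-q_i=1$ cannot both occur for the same $i$.

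In the endpoint case $q_i=1$ the sum $\sum_{a<b}$ defining $\mu_{\mathcal V^i}$ is empty, so $\mu_{\mathcal V^i}\equiv 0$; simultaneously $\Sigma_{\mathcal V}=\sum_j(\xi^j_{11})^2\geq 0$, which gives $B^{\mathcal V}(\sigma^i)\geq 0$, with equality iff $\xi^j_{11}=0$ for every $j$. By \eqref{xicoor} this last condition reads $\nabla_{E_1}E_1\in\mathcal V^i$, i.e.\ (since $E_1$ is unit) that $\sigma^i$ is geodesic. A symmetric argument handles $n-q_i=1$ with $(\sigma^i)^{\bot}$ in place of $\sigma^i$. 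Adopting the natural convention that $\mu_{\mathcal V^i}/(q_i-1)$ denotes $0$ when $q_i=1$ (and similarly for the $\mathcal H^i$-term), the four lower bounds combine via \eqref{Bendingnew} to yield \eqref{Bendingeq}.

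For the characterisation of equality I will exploit that \eqref{Bendingeq} is a sum of nonnegative quantities $B^{\mathcal V}(\sigma^i)-\frac{1}{q_i-1}\int\mu_{\mathcal V^i}$ and $B^{\mathcal H}(\sigma^i)-\frac{1}{n-q_i-1}\int\mu_{\mathcal H^i}$ (with the endpoint conventions above), so that equality forces each of them to vanish separately. Reading off when this occurs from Lemma \ref{Sigma}(a)--(b) for $q_i\geq 2$ and from the endpoint discussion for $q_i=1$, one obtains: $\sigma^i$ is geodesic if $q_i=1$, umbilical if $q_i=2$, and umbilical and integrable if $q_i\geq 3$; and, by the analogous analysis on the horizontal side, the parenthetical statements with $q_i$ replaced by $n-q_i$ and $\sigma^i$ by $(\sigma^i)^{\bot}$. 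This is exactly the trichotomy (i)--(iii), completing the proof.
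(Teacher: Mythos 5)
Your proposal is correct and follows essentially the same route as the paper, which derives the proposition directly from the decomposition \eqref{Bendingnew} together with Lemma \ref{Sigma}. The only difference is that you spell out the endpoint cases $q_i=1$ and $n-q_i=1$ explicitly (where $\mu_{{\mathcal V}^i}$, resp.\ $\mu_{{\mathcal H}^i}$, vanishes and equality amounts to the geodesic condition), which the paper relegates to the convention stated in the remark following the proposition.
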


\begin{remark}{\rm If $q_{i} = 1$ (resp., $q_{i} = n-1)$ then $\mu_{{\mathcal V}^{i}} = 0$ (resp., $\mu_{{\mathcal H}^{i}} = 0).$ In formula (\ref{Bendingeq}) and for this case, the quotient $\mu_{{\mathcal V}^{i}}/(q_{i} -1)$ (resp., $\mu_{{\mathcal H}^{i}}/(n-q_{i}-1))$ is supposed to be zero.}
\end{remark}

\section{An integral formula for almost regular distributions}\indent
The {\em mixed scalar curvature} $s_{\rm mix}(\sigma)$ of a smooth distribution $\sigma$ is the function on the set $M_{r}= \cup_{i=1}^{l}M_{r}^{i}\subset M$ of its regular points, locally defined on each $M_{r}^{i}$ by
\[
s_{\rm mix}(\sigma) = \sum_{a;j} g_{M_{r}^{i}}(R_{E_{a}E_{q_{i}+j}}E_{a},E_{q+j}),
\]
where $\{E_{a};E_{q_{i}+j}\}$ is an adapted local orthonormal frame of $({\mathcal V}^{i},{\mathcal H}^{i})$ on $(M_{r}^{i},g_{M_{r}^{i}})$ and $R$ is the Riemannian curvature tensor taken with the sign convention $R_{XY} = \nabla_{[X,Y]} - [\nabla_{X},\nabla_{Y}],$ for all $X,Y\in {\mathfrak X}(M).$ In particular, if $\sigma$ is an one-dimensional (resp., a codimension one) almost regular distribution, the mixed scalar curvature is locally expressed as ${\rm Ric}(V,V),$ where $V$ is a local unit vector field belonging to ${\mathcal V}$ (resp., to ${\mathcal H})$ and ${\rm Ric}$ is the Ricci tensor of $(M,g).$ In \cite{Walczak} it is proved the formula
\[
s_{\rm mix}(\sigma) = {\rm div}(H_{\mathcal V} + H_{\mathcal H}) + \|H_{\mathcal V}\|^{2} + \|H_{\mathcal H}\|^{2}  + \|A_{\mathcal V}\|^{2} + \|A_{\mathcal H}\|^{2}-  \|h_{\mathcal V}\|^{2} - \|h_{\mathcal H}\|^{2},
\]
which, from (\ref{mu}), can be written as
\begin{equation}\label{divergencia}
s_{\rm mix}(\sigma) = {\rm div}(H_{\mathcal V}+H_{\mathcal H}) + 2(\mu_{\mathcal V} + \mu_{{\mathcal H}}).
\end{equation}

In what follows, we shall suppose that $M_{s},$ given as in (\ref{set}), satisfies the additional condition that $\dim P_{\beta}\leq n-2$ for each $\beta=1,\dots ,b.$ (We again note that $M_{s}$ may be empty). From the next lemma, this implies that $\sigma$ must be almost regular. 

We say that $\sigma$ is {\em trivial} if it is an $n$-dimensional almost regular distribution. Note that the total bending of a trivial distribution is zero.
\begin{lemma}\label{lconnected} If $\dim P_{\beta}\leq n-2$ for all $\beta\in \{1,\dots ,b\},$ then $M_{r}$ is connected. Moreover, if $\sigma$ is not a trivial distribution, the converse holds.
\end{lemma}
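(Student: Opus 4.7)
My plan is to prove the two implications separately, since they rely on very different ingredients.

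For the forward direction, the argument is essentially topological. Under the hypothesis $\dim P_\beta\le n-2$ for every $\beta$, the singular set $M_s$ is a closed subset of the connected $n$-manifold $M$, expressed as a finite union of isolated points (topological dimension $0$) and topologically embedded submanifolds of topological dimension at most $n-2$. I would then invoke the classical fact that removing a closed subset of codimension at least $2$ from a connected topological manifold leaves a connected complement. Concretely, given $p,q\in M_r$, I would fix any continuous path from $p$ to $q$ in $M$ (possible by connectedness of $M$) and perturb it generically to miss each $P_\beta$ and each isolated $x_\alpha$, a standard general-position argument enabled by the dimension count $1+\dim M_s<n$.

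For the converse I would argue by contrapositive: assume some $P_{\beta_0}$ satisfies $\dim P_{\beta_0}=n-1$ and $\sigma$ is not trivial, and aim to show that $M_r$ is disconnected. In a tubular neighbourhood $U$ of $P_{\beta_0}$, the codimension-one submanifold locally splits $U\setminus P_{\beta_0}$ into two open sides $U^\pm$; the one-sided case I would reduce to the two-sided one by passing to the orientation double cover of a small tubular neighbourhood. Since $\sigma$ has a constant regular dimension $q$ on $M_r$ with $q<n$ by non-triviality, and since $d$ drops strictly on $P_{\beta_0}\subset M_s$, I would use the smoothness of the spanning vector fields of $\sigma$ to argue that the two local sides $U^\pm$ cannot belong to the same connected component of $M_r$: otherwise the $q$-plane field $\sigma_r$ would admit a smooth extension across $P_{\beta_0}$, contradicting that $P_{\beta_0}$ consists of genuine singular points of $\sigma$ together with the strict inequality $q<n$.

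The main obstacle I anticipate is in the converse, specifically in ruling out that the two local sides $U^\pm$ of $P_{\beta_0}$ are globally joined by a path in $M_r$ that exits $U$ and re-enters through the other side. Purely topological considerations are insufficient, since a codimension-one submanifold need not globally separate its ambient manifold (a non-separating meridian in a torus is the standard example). The whole delicate point is therefore to extract, from the smoothness of $\sigma$ and from $q<n$, enough rigidity to preclude any smooth extension of $\sigma_r$ across $P_{\beta_0}$; this smoothness obstruction is precisely what distinguishes the non-trivial case from the trivial one and is where I expect the bulk of the subtle work to lie.
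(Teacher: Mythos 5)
Your forward implication is fine, though it takes a different route from the paper's. You appeal to the global fact that a closed subset of topological dimension at most $n-2$ cannot disconnect a connected $n$-manifold, implemented by putting a path in general position. The paper argues instead by contradiction and purely locally: if $M_{r}=M_{r}'\cup M_{r}''$ with $M_{r}',M_{r}''$ disjoint, nonempty and open, then density of $M_{r}$ and connectedness of $M$ force ${\rm Fr}(M_{r}')\cap{\rm Fr}(M_{r}'')\neq\emptyset$; this set lies in $M_{s}$, and in a coordinate neighbourhood adapted to the relevant component a $k$-dimensional slice with $k\leq n-2$ cannot separate the chart into pieces meeting both $M_{r}'$ and $M_{r}''$, so some $P_{\beta}$ would have to be $(n-1)$-dimensional. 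Both arguments are acceptable; the paper's has the minor advantage of not needing any transversality for submanifolds that are only assumed topologically embedded.

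The converse is where your proposal has a genuine gap, and the obstacle you flag at the end is not a technical nuisance but fatal to the route you choose. The contrapositive to be proved is: if some $\dim P_{\beta_{0}}=n-1$, then either $M_{r}$ is disconnected or $\sigma$ is trivial. You set out to prove the first disjunct; the paper proves the second, its whole argument being the (very terse) claim that with $M_{r}$ connected and an $(n-1)$-dimensional $P_{\beta}$ present, $d$ must equal $n$ on all of $M_{r}$. Your proposed ``smoothness obstruction'' --- that $\sigma_{r}$ cannot extend smoothly across $P_{\beta_{0}}$ because its points are genuine singularities --- does not exist: a singular point of a generalized distribution is a point where the \emph{rank} $d$ fails to be locally constant, and this in no way prevents the regular $q$-plane field from extending smoothly across the singular locus. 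Concretely, on the flat torus $T^{2}$ let $\sigma$ be spanned by the single vector field $\sin^{2}(\pi x)\,\partial/\partial x$. Then $M_{s}=P_{1}=\{x=0\}$ is a codimension-one circle of genuine singular points (the rank drops from $1$ to $0$ there), yet $\sigma_{r}$ is the restriction of the globally smooth line field $\mathbb{R}\,\partial/\partial x$, the two local sides of $P_{1}$ are joined around the back, and $M_{r}=T^{2}\setminus P_{1}$ is connected while $\sigma$ is not trivial. So no amount of work along the lines you sketch can close the gap; the separation you are trying to establish simply fails. (The same example shows that the paper's own one-line justification --- that $d$ must take the value $n$ on all of $M_{r}$ --- is itself in need of repair, which is worth bearing in mind before investing further effort in either route.)
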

\begin{proof} Suppose that $M_{r}$ is not connected. Then there exist two disjoint nonempty open subsets $M_{r}'$ and $M_{r}''$ such that $M_{r} = M_{r}'\cup M_{r}''.$ Because $M_{r}$ is dense in $M,$ we have that $M = \bar{M}_{r}'\cup \bar{M}_{r}''$ and, from the connectedness of $M,$ it follows that the intersection of their frontiers, ${\rm Fr}(M_{r}')\cap {\rm Fr}(M_{r}''),$ is nonempty. Since ${\rm Fr}(M_{r}')\cap {\rm Fr}(M_{r}'')$ is a subset of $M_{s},$ each one of its connected components is a regular submanifold $P$ (may be a point) of $M.$ Let $({\mathcal U},\varphi=(x^{1},\dots ,x^{n}))$ be a sufficiently small coordinate neighborhood adapted to $P.$ Then, ${\mathcal U}\cap P = {\mathcal U}\cap ({\rm Fr}(M_{r}')\cap {\rm Fr}(M_{r}''))$ and it is a $k$-dimensional slice for some $k\leq n-1.$ Now, taking into account that ${\mathcal U}\cap M_{r}'$ and ${\mathcal U}\cap M_{r}''$ are non-empty open subsets in ${\mathcal U},$ $k$ must be equals to $n-1$ and so, $\dim P=n-1.$ This contradicts the hypothesis of the lemma.

For the converse, suppose that $M_{r}$ is connected and $\dim P_{\beta} = n-1,$ for some $\beta\in \{1,\dots ,b\}.$ Then $d$ takes the value $n$ on whole $M_{r}$ and so $\sigma$ must be trivial.
\end{proof}
\begin{remark}{\rm If $\dim P_{\beta} = n-1,$ for some $\beta\in \{1,\dots ,b\},$ then there exist connected components of $M_{r}$ where $d$ is constant equals to $n.$ From Lemma \ref{lconnected}, any non-almost regular distribution satisfies this condition.}
\end{remark}
Next, we prove the following integral formula, which extends the one given in \cite[Lemma 2.6]{BGV} for unit vectors with singularities.

\begin{theorem}\label{div} If $B(\sigma)<\infty,$ then
\begin{equation}\label{int1}\int_{M}s_{\rm mix}(\sigma)\;dv_{M} = 2\int_{M}(\mu_{\mathcal V}+ \mu_{\mathcal H})\;dv_{M}.
\end{equation}
\end{theorem}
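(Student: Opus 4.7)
The plan is to integrate Walczak's pointwise identity \eqref{divergencia} over the smooth compact manifold-with-boundary $M(M_s,\vec\varepsilon)$, apply the divergence theorem to the smooth vector field $H_{\mathcal V}+H_{\mathcal H}$, and pass to the limit $\vec\varepsilon\to\vec 0$. The divergence step produces
\[
\int_{M(M_s,\vec\varepsilon)} s_{\rm mix}(\sigma)\,dv = 2\int_{M(M_s,\vec\varepsilon)}(\mu_{\mathcal V}+\mu_{\mathcal H})\,dv + I(\vec\varepsilon),
\]
where $I(\vec\varepsilon)=\int_{\partial M(M_s,\vec\varepsilon)} g(H_{\mathcal V}+H_{\mathcal H},\nu)\,dA$ and $\nu$ is the outward unit conormal. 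The mixed scalar curvature is bounded on $M_r$ by the Riemann tensor of the compact $(M,g)$, so $\int_M s_{\rm mix}(\sigma)\,dv_M$ is absolutely convergent and equals the $\vec\varepsilon\to\vec 0$ limit of the first bulk term. For the second, formula \eqref{mu} together with the expressions for $H_{\mathcal V}, H_{\mathcal H}, h_{\mathcal V}, h_{\mathcal H}, A_{\mathcal V}, A_{\mathcal H}$ in terms of $\xi$ gives a pointwise bound $|\mu_{\mathcal V}|+|\mu_{\mathcal H}|\le C\|\xi\|^2$ on $M_r$, so the hypothesis $B(\sigma)<\infty$ makes this term absolutely integrable with limit $\int_M(\mu_{\mathcal V}+\mu_{\mathcal H})\,dv_M$. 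In particular $I(\vec\varepsilon)$ has a limit as $\vec\varepsilon\to\vec 0$, and it suffices to exhibit a single sequence $\vec\varepsilon_n\to\vec 0$ along which $I(\vec\varepsilon_n)\to 0$.

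For each tube component in \eqref{borde}, Cauchy--Schwarz applied to $g(H_{\mathcal V}+H_{\mathcal H},\nu)$ and the constant function $1$ on $P_\beta(t)$ yields
\[
\Bigl|\int_{P_\beta(t)} g(H_{\mathcal V}+H_{\mathcal H},\nu)\,dA\Bigr|^2\le {\rm Area}\bigl(P_\beta(t)\bigr)\,\psi_\beta(t),\qquad \psi_\beta(t):=\int_{P_\beta(t)}\|H_{\mathcal V}+H_{\mathcal H}\|^2\,dA.
\]
Fubini on a fixed tubular neighbourhood $T(P_\beta,t_0)$, together with $\|H_{\mathcal V}+H_{\mathcal H}\|^2\le C'\|\xi\|^2$ and $B(\sigma)<\infty$, gives $\int_0^{t_0}\psi_\beta(t)\,dt<\infty$. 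The expansion ${\rm Area}(P_\beta(t))\sim t^{k_\beta}$ with $k_\beta=n-1-\dim P_\beta\ge 1$ (by the standing hypothesis $\dim P_\beta\le n-2$) then forces the existence of a sequence $t_n^{(\beta)}\to 0$ with ${\rm Area}(P_\beta(t_n^{(\beta)}))\,\psi_\beta(t_n^{(\beta)})\to 0$: otherwise this product would be bounded below by some $c>0$ on a punctured neighbourhood of $0$, giving $\psi_\beta(t)\ge c\,t^{-k_\beta}$ and contradicting the $L^1$-integrability of $\psi_\beta$ near $0$. The identical argument, using ${\rm Area}(S(x_\alpha,\varepsilon))\sim \varepsilon^{n-1}$ and $n\ge 2$, handles the point singularities $x_\alpha$. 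Choosing $\vec\varepsilon_n\to\vec 0$ simultaneously along all such sequences makes $I(\vec\varepsilon_n)\to 0$, and \eqref{int1} follows.

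The main obstruction is the boundary flux $I(\vec\varepsilon)$: there is no pointwise control on $H_{\mathcal V}+H_{\mathcal H}$ near $M_s$, only the $L^2$-information inherited from $B(\sigma)<\infty$. The codimension-at-least-two assumption $\dim P_\beta\le n-2$ is exactly what converts this weak control into a subsequential decay of the boundary integral; if $\dim P_\beta=n-1$ were permitted, $k_\beta=0$ and the Fubini/Chebyshev step would break down, which is consistent with the structural role this hypothesis plays throughout Section~4.
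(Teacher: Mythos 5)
Your argument is correct and follows essentially the same route as the paper: integrate Walczak's identity \eqref{divergencia} over $M(M_{s},\vec{\varepsilon})$, control the boundary flux of $H_{\mathcal V}+H_{\mathcal H}$ pointwise by $\|\xi\|$, and use $B(\sigma)<\infty$ together with $\dim P_{\beta}\leq n-2$ to kill that flux along a sequence $\vec{\varepsilon}_{n}\to\vec{0}$. The only difference is that where the paper invokes \cite[Lemma 2.4]{BGV} (Lemma \ref{lgeneral}) and Lemma \ref{lnew} as packaged steps, you re-derive the same facts inline via Cauchy--Schwarz, the coarea/Fubini decomposition of the tube, and the area asymptotics of $P_{\beta}(t)$ and $S(x_{\alpha},\varepsilon)$ --- which is precisely the content of the cited lemma.
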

For the proof, we first need the following lemmas.

\begin{lemma}\label{lgeneral} {\rm \cite[Lemma 2.4]{BGV}} Let $f:M_{r} \to [0,\infty[$ be an almost continous function on $M_{r}$ and suppose that
 \begin{enumerate}
 \item[{\rm (i)}] there exists a point $x\in M_{s}$ such that
 \[
 \liminf_{r\to 0^+}\int_{S(x,r)} f\; dv_{S(x,r)}>0\;\;\;{\rm or,}
 \]
 \item[{\rm (ii)}] there exists an embedded submanifold $P\subset M_{s},$ $\dim P\leq n-2,$ such that
 \[
 \liminf_{r\to 0^+}\int_{P(r)} f\; dv_{P(r)}>0,
 \]
 then
 \[
 \int_{M}f^{2}\; dv_{M} = \infty.
 \]
 \end{enumerate}
 \end{lemma}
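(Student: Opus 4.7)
The plan is to treat the two cases uniformly via a coarea-type reduction combined with Cauchy--Schwarz. In both cases, the hypothesis gives a positive lower bound on integrals of $f$ over a one-parameter family of hypersurfaces whose $(n-1)$-volumes shrink only polynomially in $r$, and the codimension hypothesis forces the rate to be slow enough that $f^2$ is non-integrable near the singular stratum.

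First I would strengthen the $\liminf$ hypothesis into a uniform lower bound: by definition of $\liminf$, there exist constants $c>0$ and $r_{0}>0$ such that either
\begin{equation*}
\int_{S(x,r)} f\,dv_{S(x,r)}\;\geq\; c\quad \text{for all } r\in(0,r_{0})
\end{equation*}
in case (i), or $\int_{P(r)} f\,dv_{P(r)}\geq c$ for all $r\in(0,r_{0})$ in case (ii). Shrinking $r_{0}$ if necessary, in case (i) I would assume the exponential map at $x$ is a diffeomorphism from the $r_{0}$-ball in $T_{x}M$ onto $B(x,r_{0})$, and in case (ii) that the normal exponential map of $P$ is a diffeomorphism onto $T(P,r_{0})$, so that geodesic polar coordinates (resp.\ Fermi coordinates along $P$) apply.

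For case (i), the coarea formula for the distance $\rho(\cdot)=d(\cdot,x)$ gives
\begin{equation*}
\int_{B(x,r_{0})\cap M_{r}} f^{2}\,dv_{M} \;=\; \int_{0}^{r_{0}}\!\int_{S(x,r)} f^{2}\,dv_{S(x,r)}\,dr,
\end{equation*}
and Cauchy--Schwarz on each geodesic sphere yields
\begin{equation*}
\int_{S(x,r)} f^{2}\,dv_{S(x,r)} \;\geq\; \frac{\bigl(\int_{S(x,r)} f\,dv_{S(x,r)}\bigr)^{2}}{\mathrm{Vol}(S(x,r))}\;\geq\; \frac{c^{2}}{\mathrm{Vol}(S(x,r))}.
\end{equation*}
Since $\mathrm{Vol}(S(x,r))=O(r^{n-1})$ as $r\to 0^{+}$, the inner integrand is bounded below by a constant multiple of $r^{-(n-1)}$; as $n\geq 2$, the resulting $\int_{0}^{r_{0}}r^{-(n-1)}dr$ diverges, so $\int_{M}f^{2}dv_{M}=\infty$. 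For case (ii), the same coarea identity applied to the distance to $P$ (whose gradient has unit length on $T(P,r_{0})\setminus P$) yields
\begin{equation*}
\int_{T(P,r_{0})\cap M_{r}} f^{2}\,dv_{M} \;=\; \int_{0}^{r_{0}}\!\int_{P(r)} f^{2}\,dv_{P(r)}\,dr,
\end{equation*}
and the same Cauchy--Schwarz step gives $\int_{P(r)}f^{2}\,dv_{P(r)}\geq c^{2}/\mathrm{Vol}(P(r))$. Here $P(r)$ is, for small $r$, a sphere bundle of fibre dimension $n-k-1$ over $P$ (with $k=\dim P$), so $\mathrm{Vol}(P(r))=O(r^{n-k-1})$; the hypothesis $k\leq n-2$ gives $n-k-1\geq 1$, and again $\int_{0}^{r_{0}}r^{-(n-k-1)}dr=\infty$.

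The main obstacle is justifying the two polynomial volume asymptotics and the validity of the coarea decompositions in the presence of the singular set. Both are standard: the sphere volume $\mathrm{Vol}(S(x,r))$ is asymptotic to $\omega_{n-1}r^{n-1}$, and $\mathrm{Vol}(P(r))$ expands via the tube formula with leading term proportional to $r^{n-k-1}$; and since the distance function to $x$ (resp.\ to $P$) is smooth with unit-length gradient on the punctured ball (resp.\ on $T(P,r_{0})\setminus P$), the coarea formula applies on those open sets, which cover a full-measure subset of $B(x,r_{0})$ and $T(P,r_{0})$ respectively. Once these standard facts are in place, the argument is simply Cauchy--Schwarz plus the divergence of $\int_{0}r^{-(n-1)}dr$ or $\int_{0}r^{-(n-k-1)}dr$.
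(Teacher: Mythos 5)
Your proof is correct. Note that the paper itself contains no proof of this statement: it is imported verbatim from \cite[Lemma 2.4]{BGV}. Your argument --- extracting a uniform lower bound $c>0$ from the $\liminf$, decomposing via the coarea formula along the distance function to $x$ (resp.\ to $P$), applying Cauchy--Schwarz on each geodesic sphere or tubular hypersurface, and using the volume asymptotics $\mathrm{Vol}(S(x,r))=O(r^{n-1})$ and $\mathrm{Vol}(P(r))=O(r^{n-\dim P-1})$ together with $\dim P\leq n-2$ to force divergence of the radial integral --- is precisely the standard argument used in that reference.
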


 \begin{lemma}\label{lnew} There exists a constant $C_{n},$ which only depends on $n,$ such that
 \[
 \|H_{\mathcal V} + H_{\mathcal H}\|\leq C_{n}\|\xi\|.
 \]
 \end{lemma}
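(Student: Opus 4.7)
The plan is to prove the lemma by a direct pointwise Cauchy--Schwarz estimate, working in a local $({\mathcal V}^i,{\mathcal H}^i)$-adapted orthonormal frame $\{E_a; E_{q_i+j}\}$ on each $M_r^i$ and then observing that the constant can be chosen independently of the index $i$.

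First I would exploit the fact that by the defining relations \eqref{xicoor}, $\xi_U V = p_{\mathcal H}(\nabla_U V) \in \mathcal H$ for $U,V \in \mathcal V$, so $H_{\mathcal V} = \sum_a \xi_{E_a} E_a \in \mathcal H$; symmetrically $H_{\mathcal H} \in \mathcal V$. Consequently the two vectors are $g$-orthogonal and
\[
\|H_{\mathcal V} + H_{\mathcal H}\|^{2} = \|H_{\mathcal V}\|^{2} + \|H_{\mathcal H}\|^{2}.
\]
Expanding in the adapted frame one has $H_{\mathcal V} = \sum_j \bigl(\sum_a \xi^{j}_{aa}\bigr) E_{q_i+j}$, and similarly for $H_{\mathcal H}$.

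Next I would apply the elementary Cauchy--Schwarz inequality to the inner sums. This gives
\[
\|H_{\mathcal V}\|^{2} = \sum_{j}\Big(\sum_{a}\xi^{j}_{aa}\Big)^{2} \leq q_{i}\sum_{a,j}(\xi^{j}_{aa})^{2} \leq q_{i}\,\Sigma_{\mathcal V},
\]
and analogously $\|H_{\mathcal H}\|^{2} \leq (n-q_{i})\,\Sigma_{\mathcal H}$. Invoking the identity $\|\xi\|^{2}=2(\Sigma_{\mathcal V}+\Sigma_{\mathcal H})$ already established just before \eqref{Bendingnew}, and using the crude bounds $q_i, n-q_i \leq n$, one obtains
\[
\|H_{\mathcal V}+H_{\mathcal H}\|^{2} \leq n\,(\Sigma_{\mathcal V}+\Sigma_{\mathcal H}) = \frac{n}{2}\,\|\xi\|^{2}.
\]
Thus the lemma holds with $C_{n}=\sqrt{n/2}$.

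There is essentially no obstacle here; the only subtlety is to confirm that $H_{\mathcal V}$ and $H_{\mathcal H}$ lie in complementary subbundles so that the norm splits, and to replace the $i$-dependent bounds $q_i$ and $n-q_i$ by the uniform bound $n$ so the same constant $C_n$ works simultaneously on every connected component $M_r^i$. Both are immediate from the definitions, so the lemma is proved pointwise on $M_r$.
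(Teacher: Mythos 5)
Your argument is correct, and the overall strategy coincides with the paper's: both proofs work pointwise in a $(\mathcal V^i,\mathcal H^i)$-adapted orthonormal frame, use the orthogonality $H_{\mathcal V}\in\mathcal H$, $H_{\mathcal H}\in\mathcal V$ to split $\|H_{\mathcal V}+H_{\mathcal H}\|^2=\|H_{\mathcal V}\|^2+\|H_{\mathcal H}\|^2$, and then dominate each summand by the corresponding $\Sigma_{\mathcal V}$ or $\Sigma_{\mathcal H}$. Where you differ is in how that domination is obtained and combined. The paper proves the inequalities $(q_i+1)\Sigma_{\mathcal V}-\|H_{\mathcal V}\|^2\geq 0$ and $(n-q_i+1)\Sigma_{\mathcal H}-\|H_{\mathcal H}\|^2\geq 0$ by exhibiting the left-hand sides as explicit sums of squares (the same completing-the-square manipulations used in Lemma 3.1), then bounds the resulting $i$-dependent constant by maximizing $f(x)=\tfrac{1}{2}(x+1)(n-x+1)$ at $x=n/2$, arriving at $C_n=\tfrac{(n+2)\sqrt{2}}{4}$. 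You instead get $\|H_{\mathcal V}\|^2=\sum_j\bigl(\sum_a\xi^j_{aa}\bigr)^2\leq q_i\Sigma_{\mathcal V}$ in one line from Cauchy--Schwarz plus the observation that the diagonal entries $(\xi^j_{aa})$ form a sub-family of all the $(\xi^j_{ab})$, and then replace $q_i$, $n-q_i$ by the uniform bound $n$. This is shorter, avoids the sum-of-squares identities entirely, and yields the sharper constant $C_n=\sqrt{n/2}$; since Theorem 4.4 only needs \emph{some} constant depending on $n$ (the boundary integrals are killed by Lemma 4.3 regardless of the value of $C_n$), either constant serves equally well.
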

 \begin{proof} Taking a $({\mathcal V}_{i},{\mathcal H}_{i})$-adapted local orthonormal frame $\{E_{a};E_{q_{i}+j}\}$ in each open subset $M_{r}^{i},$ one gets
 $$
 \begin{array}{lcl}
  \|H_{\mathcal V} + H_{\mathcal H}\|^{2} & = & \|H_{\mathcal V}\|^{2} + \|H_{\mathcal H}\|^{2} = \sum_{j}\Big (\sum_{a}(\xi_{aa}^{j})^{2}+ 2\sum_{a<b}\xi_{aa}^{j}\xi_{bb}^{j}\Big )\\[0.4pc]
  & & \hspace{0.2cm} +\sum_{a}\Big (\sum_{j} (\xi_{jj}^{a})^{2} + 2\sum_{j<k}\xi_{jj}^{a}\xi_{kk}^{a}\Big ).
\end{array}
$$
 Hence, because $\|\xi\|^{2} = 2(\Sigma_{\mathcal V} + \Sigma_{\mathcal H}),$
$$
\begin{array}{lcl}
\|\xi\|^{2} - \|H_{\mathcal V} + H_{\mathcal H}\|^{2} & = & (2\Sigma_{\mathcal V} - \|H_{\mathcal V}\|^{2}) + (2\Sigma_{\mathcal H} - \|H_{\mathcal H}\|^{2})\\[0.4pc]
 & =& \sum_{j}\Big (\sum_{a}(\xi_{aa}^{j})^{2} + 2\sum_{a\neq b}(\xi_{ab}^{j})^{2} - 2\sum_{a<b}\xi_{aa}^{j}\xi_{bb}^{j}\Big )\\[0.4pc]
&  & + \sum_{a}\Big (\sum_{j}(\xi_{jj}^{a})^{2} + 2\sum_{j\neq k}(\xi_{jk}^{a})^{2} - 2\sum_{j<k}\xi_{jj}^{a}\xi_{kk}^{a}\Big ).
\end{array}
$$
Then,
$$
\begin{array}{lcl}
(q_{i} + 1)\Sigma_{\mathcal V} - \|H_{\mathcal V}\|^{2} & = & (q_{i}-1)\Sigma_{\mathcal V} + (2\Sigma_{\mathcal V}-\|H_{\mathcal V}\|^{2})\\[0.4pc]
 & = & \sum_{j}\Big (\sum_{a<b}(\xi_{aa}^{j}-\xi_{bb}^{j})^{2} + 2\sum_{a<b}\xi_{aa}^{j}\xi_{bb}^{j} + (q_{i}-1)\sum_{a\neq b}(\xi_{ab}^{j})^{2}\Big )\\[0.4pc]
 & & + \sum_{j}\Big (\sum_{a}(\xi_{aa}^{j})^{2} + 2\sum_{a\neq b}(\xi_{ab}^{j})^{2} - 2\sum_{a<b}\xi_{aa}^{j}\xi_{bb}^{j}\Big )\\[0.4pc]
 & = & \sum_{j}\Big (\sum_{a<b}(\xi_{aa}^{j}-\xi_{bb}^{j})^{2} + (q_{i}+1)\sum_{a\neq b}(\xi_{ab}^{j})^{2} + \sum_{a}(\xi_{aa}^{j})^{2}\Big ) \geq 0.
 \end{array}
 $$
In the same way, we get
$$
\begin{array}{lcl}
(n-q_{i} + 1)\Sigma_{\mathcal H} - \|H_{\mathcal H}\|^{2} & = &  \sum_{a}\Big (\sum_{j<k}(\xi_{jj}^{a}-\xi_{kk}^{a})^{2} \\[0.4pc]
& & \hspace{0.2cm} + (n-q_{i}+1)\sum_{j\neq k}(\xi_{jk}^{a})^{2} + \sum_{a}(\xi_{jj}^{a})^{2}\Big )\geq 0.
 \end{array}
$$
Therefore, we have proved on $M_{r}^{i}$ that $\frac{(q_{i}+1)(n-q_{i}+1)}{2}\|\xi\|^{2} - \|H_{\mathcal V} + H_{\mathcal H}\|^{2}\geq 0.$ Because the function $f(x) = \frac{(x+1)(n-x+1)}{2}$ has a maximum at $x = \frac{n}{2},$ the inequality $\frac{(n+2)^{2}}{8}\|\xi\|^{2} - \|H_{\mathcal V} + H_{\mathcal H}\|^{2}\geq 0$ holds on whole $M_{r}.$ Thus, taking $C_{n} = \frac{(n+2)\sqrt{2}}{4},$ the lemma is proved.
\end{proof}

\noindent {\em Proof of Theorem {\rm \ref{div}}.} Denote by $N$ the unit outward normal vector field to $\partial M(M_{s},\vec{\varepsilon}).$ Then, from (\ref{divergencia}) and the divergence theorem, we obtain
$$
\begin{array}{lcl}
\left| \int_{M(M_{s},\vec{\varepsilon})}(s_{\rm mix} - 2(\mu_{\mathcal V} + \mu_{\mathcal H}))\;dv_{M(M_{s},\vec{\varepsilon})}\right| &  \leq & \int_{\partial M(M_{s},\vec{\varepsilon})}|g(H_{\mathcal V} + H_{\mathcal H},N)|\;dv_{\partial M(M_{s},\vec{\varepsilon})}\\[0.7pc]
 & \leq  & \int_{\partial M(M_{s},\vec{\varepsilon})}\|H_{\mathcal V} + H_{\mathcal H}\|\;dv_{\partial M(M_{s},\vec{\varepsilon})}.
\end{array}
$$
Hence, using Lemma \ref{lnew}, it follows that
$$
\begin{array}{l}
\left| \int_{M(M_{s},\vec{\varepsilon})}(s_{\rm mix} - 2(\mu_{\mathcal V} + \mu_{\mathcal H}))\;dv_{M(M_{s},\vec{\varepsilon})}\right| \leq C_{n}\int_{\partial M(M_{s},\vec{\varepsilon})}\|\xi\|\;dv_{\partial M(M_{s},\vec{\varepsilon})}\\[0.6pc]
\hspace{0.3cm}= C_{n}\left(\sum_{\alpha=1}^{a}\int_{S(x_{\alpha},\varepsilon_{\alpha})}\|\xi\;\|dv_{S(x_{\alpha},\varepsilon_{\alpha})} + \sum_{\beta=1}^{b}\int_{P_{\beta}(\bar{\varepsilon}_{\beta})}\|\xi\|\;dv_{P_{\beta}(\bar{\varepsilon}_{\beta})}\right).
\end{array}
$$
Now, putting $f = \|\xi\|$ in Lemma \ref{lgeneral}, we have 
\[
\liminf_{r_{\alpha}\to 0^{+}}\int_{S(x_{\alpha},\varepsilon_{\alpha})}\|\xi\|\;dv_{S(x_{\alpha},\varepsilon_{\alpha})} = \liminf_{\bar{r}_{\beta}\to 0^{+}}\int_{P_{\beta}(\bar{\varepsilon}_{\beta})}\|\xi\|\;dv_{P_{\beta}(\bar{\varepsilon}_{\beta})} = 0,
\]
for $\alpha = 1,\dots ,a,$ $\beta = 1,\dots ,b.$ This implies that the integral $\int_{M}(\mu_{\mathcal V} + \mu_{\mathcal H})\;dv_{M}$ converges and the result follows.

\begin{remark}{\rm For codimension one almost regular distributions on a compact Einstein manifold $(M,g),$ the integral equation (\ref{int1}) takes the form
\begin{equation}\label{int2}
\int_{M}\mu_{\mathcal V}dv_{M} = \frac{\tau}{2n}{\rm Vol}(M,g),
\end{equation}
where $\tau$ is the scalar curvature of $(M,g).$ In \cite[Remark 3.5]{BGV} it is proved that the total bending of the orthogonal distribution to the radial vector field around a point in the complex projective space ${\mathbb C}P^{m}(\lambda)$ is infinite and that (\ref{int2}) is not satisfied. Hence, the assumption $B(\sigma)<\infty$ in Theorem \ref{div} can not be omitted.}
\end{remark}

\section{The energy functional of almost regular distributions}\indent
Let $\sigma$ be a $q$-dimensional almost regular distribution on an $n$-dimensional compact Riemannian manifold $(M,g).$ Suppose, as in previous section, that the set of singular points $M_{s}$ of $\sigma$ is of the form (\ref{set}) and $\dim P_{\beta}\leq n-2$ for each $\beta=1,\dots ,b.$ Next, we extend, using Theorem \ref{div}, some results about the total bending of unit vector fields with singularities given in \cite{BGV} and \cite{BW}. For the two-dimensional case, we have: 

\begin{theorem}\label{Tori} Tori are the unique compact oriented surfaces in $\RR^{3}$ admitting an one-dimensional almost regular distribution $\sigma,$ with a finite set of singular points, which may be empty, such that $B(\sigma)<\infty.$
\end{theorem}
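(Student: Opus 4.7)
The plan is to deduce the statement directly from the integral formula of Theorem~\ref{div} combined with the Gauss--Bonnet theorem. Since $M$ is a surface ($n=2$) and $\sigma$ is one-dimensional ($q=1$), the set $M_s$ necessarily consists only of isolated points (the condition $\dim P_\beta\le n-2=0$ forces every singular submanifold to degenerate to a point), so the framework of Section~$4$ applies and $\sigma$ is almost regular with $M_r$ connected (Lemma~\ref{lconnected}). Moreover, both $\mathcal V$ and $\mathcal H$ are one-dimensional line fields on $M_r$, so by the Remark following Proposition~\ref{pdesigual} the second mean curvatures vanish identically: $\mu_{\mathcal V}=0$ (because $q_i=1$) and $\mu_{\mathcal H}=0$ (because $n-q_i=1$).

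Next, I would identify the mixed scalar curvature. For a one-dimensional distribution, the excerpt notes that $s_{\rm mix}(\sigma)=\Ric(V,V)$ for a local unit section $V$ of $\mathcal V$. On a surface the Ricci tensor satisfies $\Ric=Kg$ where $K$ is the Gauss curvature, hence $s_{\rm mix}(\sigma)=K$ pointwise on $M_r$. Assuming $B(\sigma)<\infty$, Theorem~\ref{div} then gives
\[
\int_M K\,dv_M \;=\; 2\int_M(\mu_{\mathcal V}+\mu_{\mathcal H})\,dv_M \;=\; 0.
\]
(Since $M_s$ is finite, the integral of $K$ over $M$ equals the integral over $M_r$, so this identity on $M_r$ lifts to $M$.) The Gauss--Bonnet theorem yields $2\pi\chi(M)=0$, so $\chi(M)=0$; for a compact orientable surface smoothly embedded in $\RR^3$ this forces genus one, i.e.\ $M$ is a torus.

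For the converse, any torus in $\RR^3$ has $\chi=0$ and therefore admits a nowhere-vanishing smooth vector field; the associated one-dimensional regular (hence almost regular) distribution has empty singular set, and its intrinsic torsion $\xi$ is globally smooth on the compact manifold $M$, so $B(\sigma)=\frac14\int_M\|\xi\|^2\,dv_M<\infty$. There is essentially no technical obstacle here: the whole argument is a one-line application of Theorem~\ref{div} once one observes the two-dimensional reductions $\mu_{\mathcal V}=\mu_{\mathcal H}=0$ and $s_{\rm mix}(\sigma)=K$. The only small point requiring attention is making sure the hypothesis $\dim P_\beta\le n-2$ of Theorem~\ref{div} is compatible with the permitted singular sets (which it is, trivially, since $n-2=0$).
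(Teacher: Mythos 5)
Your forward direction is exactly the paper's argument: for $n=2$, $q=1$ both second mean curvatures vanish, $s_{\rm mix}(\sigma)$ reduces to the Gauss curvature $K$ on $M_r$, and Theorem~\ref{div} plus Gauss--Bonnet force $\chi(M)=0$, hence a torus. Where you diverge is the converse. The paper exhibits a concrete example --- the foliation of the rotational torus by level sets of the height function --- and computes its total bending explicitly, bounding it by $2(\pi/(R-r))^2$. You instead argue softly: $\chi(T^2)=0$ gives a nowhere-vanishing vector field by the Hopf theorem, the resulting distribution is regular with empty singular set, and continuity of $\|\xi\|^2$ on the compact surface makes $B(\sigma)$ finite. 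Your route is shorter, avoids any computation, and in fact covers \emph{every} embedded torus in $\RR^3$ with its induced metric rather than just the surface of revolution, so it proves a marginally stronger converse; what it gives up is the explicit value of the bending, which the paper's computation provides and which is in the spirit of the quantitative examples elsewhere in Section~6. Both arguments are correct and complete.
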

\begin{proof} Since the mixed curvature of $\sigma$ on a surface $M$ coincides with the restriction of its Gauss curvature $K$ to $M_{r},$ it follows from Theorem \ref{div} that
\[
\int_{M}K\;dv_{M} = 0.
\]
Then, from the Gauss-Bonnet Theorem and the classification of oriented compact surfaces, $M$ has to be a torus. For the converse, consider the rotational torus $T^2$ in $\RR^{3}$ given by
\[
T^{2} = \{(x,y,z) = ((R + r\cos \theta)\cos \varphi, (R + r\cos \theta)\sin \varphi, r\sin \theta )\},\;\; 0<r<R.
\]
The level sets of the (isoparametric) function $f:T^{2}\to \RR,$ $f(x,y,z) = z,$ determine a regular (Riemannian) foliation ${\mathcal F}.$ A local orthonormal frame adapted to the corresponding tangent distribution $\sigma$ of ${\mathcal F}$ is given by the vector fields $E_{1} = \frac{1}{(R + r\cos\theta)}\frac{\partial}{\partial \varphi},$ $E_{2} = \frac{1}{r}\frac{\partial}{\partial \theta}.$
Since $[E_{1},E_{2}] = -\frac{\sin \theta}{R + r\cos\theta}E_{1},$ the Koszul formula implies that  $\xi_{11}^{2} = \frac{\sin\theta}{R + r\cos\theta}$ and $\xi_{22}^{1} = 0.$ Hence,
\[
B(\sigma) = \frac{1}{2}\int_{0}^{2\pi}\Big (\int_{0}^{2\pi}\frac{\sin^{2}\theta}{(R+r\cos\theta)^{2}}d\theta\Big )d\varphi\leq 2\left ( \frac{\pi}{R-r}\right )^{2}< \infty.
\]
\end{proof}
For $n\geq 3,$ we extend the results in \cite[Theorem 2.2]{BGV} and in \cite[Theorem 1]{BW}. 
\begin{theorem}\label{tbending} Let $\sigma$ be a $q$-dimensional almost regular distribution on an $n$-dimensional, $n\geq 3,$ compact Riemannian manifold $(M,g).$ We have the following cases:
\begin{enumerate}
\item[{\rm (I)}] If $q = 1$ {\rm (}resp., $n-q = 1),$ then
\begin{equation}\label{q1}
B(\sigma)  \geq \frac{1}{2(n-2)}\int_{M}s_{\rm mix}(\sigma)\;dv_{M}.
\end{equation}
The equality holds if and only if one of the following conditions is satisfied:
\begin{enumerate}
\item[{\rm (i)}] if $n = 3,$ $\sigma_{r}$ {\rm (}resp., $\sigma_{r}^{\bot})$ is geodesic and $\sigma_{r}^{\bot}$ {\rm (}resp., $\sigma_{r})$ is umbilical.
\item[{\rm (ii)}] if $n\geq 4,$ $\sigma_{r}^{\bot}$ {\rm (}resp., $\sigma_{r})$ defines an umbilical Riemannian foliation on $M_{r}.$
\end{enumerate}
\item[{\rm (II)}] If $q=\frac{n}{2},$ then
\[
B(\sigma)  \geq \frac{1}{n-2}\int_{M}s_{\rm mix}(\sigma)\;dv_{M}.
\]
The equality holds if and only if one of the following conditions is satisfied:
\begin{enumerate}
\item[{\rm (i)}] if $q=2$ $(n = 4),$ $\sigma_{r}$ and $\sigma_{r}^{\bot}$ are umbilical distributions.
\item[{\rm (ii)}] if $q\geq 3,$ $\sigma_{r}$ and $\sigma_{r}^{\bot}$ define umbilical foliations on $M_{r}.$
\end{enumerate}
\item[{\rm (III)}] If $1<q< \frac{n}{2}$ $(n\geq 5)$ and $\sigma_{r}$ is an umbilical distribution {\rm (}resp., $\frac{n}{2}<q< n-1$ and $\sigma_{r}^{\bot}$ umbilical), then
\[
B(\sigma)  \geq \frac{1}{2(n-q-1)}\int_{M}s_{\rm mix}(\sigma)\;dv_{M}\;\;\;\;(\mbox{resp.,}\;B(\sigma)  \geq \frac{1}{2(q-1)}\int_{M}s_{\rm mix}(\sigma)\;dv_{M}).
\]
The equality holds if and only if $\sigma_{r}^{\bot}$ {\rm (}resp., $\sigma_{r})$ defines an umbilical polar foliation on $M_{r}.$
\end{enumerate}
\end{theorem}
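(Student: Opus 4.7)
The plan is to combine the pointwise lower bound of Proposition~\ref{pdesigual} with the integral identity of Theorem~\ref{div}. Since $\sigma$ is almost regular one has $l=1$ and $q_{1}=q$, and the Remark following Proposition~\ref{pdesigual} turns that bound into
\[
B(\sigma) \;\geq\; \int_{M}\Big(\frac{\mu_{\mathcal V}}{q-1} + \frac{\mu_{\mathcal H}}{n-q-1}\Big)\,dv_{M},
\]
where the first (resp.\ second) summand is interpreted as $0$ when $q=1$ (resp.\ $q=n-1$). If $B(\sigma)=\infty$ the claims are vacuous, so one may assume $B(\sigma)<\infty$ and use Theorem~\ref{div} to rewrite $\int_{M}(\mu_{\mathcal V}+\mu_{\mathcal H})\,dv_{M}=\tfrac{1}{2}\int_{M}s_{\rm mix}(\sigma)\,dv_{M}$.

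Cases~(I) and~(II) will then be immediate. For (I) with $q=1$ only the $\mu_{\mathcal H}/(n-2)$ term survives, giving $B(\sigma)\geq \tfrac{1}{n-2}\int_{M}\mu_{\mathcal H}\,dv_{M}=\tfrac{1}{2(n-2)}\int_{M}s_{\rm mix}(\sigma)\,dv_{M}$, which is~(\ref{q1}). The equality clause will be read off from Proposition~\ref{pdesigual}: $q_{1}=1$ forces $\sigma_{r}$ geodesic, while on the complementary side $n-q=2$ forces $\sigma_{r}^{\bot}$ umbilical (case $n=3$) and $n-q\geq 3$ forces $\sigma_{r}^{\bot}$ umbilical and integrable; combined with $\sigma_{r}$ geodesic, the latter is precisely ``$\sigma_{r}^{\bot}$ is an umbilical Riemannian foliation on $M_{r}$.'' For (II) the two denominators coincide with $n/2-1$, so the two terms fuse into $\tfrac{2}{n-2}\int_{M}(\mu_{\mathcal V}+\mu_{\mathcal H})\,dv_{M}$, and equality in Proposition~\ref{pdesigual} delivers~(II)(i) when $q=2$ and~(II)(ii) when $q\geq 3$. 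The ``resp.''\ halves of~(I) and~(II) are the mirror images obtained by interchanging $\mathcal V$ and $\mathcal H$.

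The only slightly delicate step is~(III), because the weights $1/(q-1)$ and $1/(n-q-1)$ differ. Here the hypothesis that $\sigma_{r}$ is umbilical enters decisively through Remark~\ref{Rumbilical}: it gives $\mu_{\mathcal V}\geq 0$ on $M_{r}$, and $q<n/2$ gives $q-1<n-q-1$, whence the pointwise estimate $\mu_{\mathcal V}/(q-1)\geq \mu_{\mathcal V}/(n-q-1)$. Weakening the Proposition~\ref{pdesigual} bound to the common denominator $n-q-1$ and applying Theorem~\ref{div} then yields $B(\sigma)\geq \tfrac{1}{2(n-q-1)}\int_{M}s_{\rm mix}(\sigma)\,dv_{M}$; the range $n/2<q<n-1$ with $\sigma_{r}^{\bot}$ umbilical is handled symmetrically. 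For equality one needs both $\mu_{\mathcal V}\equiv 0$, which together with $\sigma_{r}$ umbilical forces $\sigma_{r}$ integrable and geodesic (Remark~\ref{Rumbilical}), and equality in Proposition~\ref{pdesigual}, which, since $n-q\geq 3$, forces $\sigma_{r}^{\bot}$ umbilical and integrable. Translating via Section~2: $\sigma_{r}^{\bot}$ integrable together with $\sigma_{r}$ geodesic makes $\sigma_{r}^{\bot}$ Riemannian, and the integrability of $\sigma_{r}$ then upgrades it to polar, so the equality case is exactly ``$\sigma_{r}^{\bot}$ defines an umbilical polar foliation on $M_{r}$.'' The main obstacle will be nothing more than careful bookkeeping of the ``resp.''\ variants across the three cases; no ingredient beyond Proposition~\ref{pdesigual}, Theorem~\ref{div} and Remark~\ref{Rumbilical} is required.
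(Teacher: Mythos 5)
Your proposal is correct and follows essentially the same route as the paper: it assumes $B(\sigma)<\infty$, invokes the specialization of Proposition~\ref{pdesigual} to the almost regular case, converts $\int_M(\mu_{\mathcal V}+\mu_{\mathcal H})\,dv_M$ via Theorem~\ref{div}, and in case~(III) uses Remark~\ref{Rumbilical} to discard the excess $\mu_{\mathcal V}$ term exactly as the paper does by splitting off the nonnegative remainder $\frac{n-2q}{(q-1)(n-q-1)}\int_M\mu_{\mathcal V}\,dv_M$. The equality discussions, including the translation of ``integrable plus geodesic'' into the Riemannian and polar terminology of Section~2, also match the paper's argument.
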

\begin{proof} We can suppose that $B(\sigma)< \infty$ because if $B(\sigma) = \infty,$ there is nothing to prove. Moreover, from (\ref{Bendingeq}), one gets
\begin{equation}\label{almost}
B(\sigma) \geq \int_{M}\Big ( \frac{\mu_{\mathcal V}}{q -1} + \frac{\mu_{\mathcal H}}{n-q-1}\Big ) dv_{M}.
\end{equation}

If $q = 1$ then $\mu_{\mathcal V} = 0.$ Hence, (\ref{almost}) and Theorem \ref{div} imply that
\[
B(\sigma)\geq \frac{1}{n-2}\int_{M}\mu_{\mathcal H}\;dv_{M} = \frac{1}{2(n-2)}\int_{M}s_{\rm mix}(\sigma)\;dv_{M}.
\]
From Proposition \ref{pdesigual}, we get (i) and (ii) in (I). In similar way, for $q = n-1$ is also proved.

If $q= \frac{n}{2},$ the inequality (\ref{almost}) together Theorem \ref{div} imply that
\[
B(\sigma)\geq \frac{1}{q-1}\int_{M}(\mu_{\mathcal V} + \mu_{\mathcal H})dv_{M} = \frac{1}{n-2}\int_{M}s_{\rm mix}(\sigma)\;dv_{M}.
\]
Moreover, applying Proposition \ref{pdesigual}, the case (II) follows.

Finally, suppose that $1<q< \frac{n}{2}$ and $\sigma_{r}$ is an umbilical distribution. Then, from Remark \ref{Rumbilical}, (\ref{almost}) and Theorem \ref{div}, we obtain
$$
\begin{array}{lcl}
B(\sigma) & \geq & \displaystyle\frac{1}{n-q-1}\displaystyle\int_{M}(\mu_{\mathcal V} + \mu_{\mathcal H})\;dv_{M} + \frac{n-2q}{(q-1)(n-q-1)}\int_{M}\mu_{\mathcal V}\;dv_{M}\\[0.4pc]
 & \geq & \displaystyle\frac{1}{n-q-1}\int_{M}(\mu_{\mathcal V} + \mu_{\mathcal H})\;dv_{M} = \frac{1}{2(n-q-1)}\int_{M}s_{\rm mix}(\sigma)\;dv_{M}.
 \end{array}
 $$
For the equality in this expression, we use again Remark \ref{Rumbilical} and Proposition \ref{pdesigual}. This proves (III). For the case $\frac{n}{2}<q< n-1$ and $\sigma_{r}^{\bot}$ umbilical we use similar arguments.
\end{proof}
 For one-dimensional or codimension one almost regular distributions on compact Einstein manifolds, the inequality (\ref{q1}) may also written as
\[
B(\sigma) \geq \frac{\tau}{2n(n-2)}{\rm Vol}(M,g).
\]
In \cite{Chen} it is proved that if an irreducible symmetric space $M$ admits a totally umbilical hypersurface $N$ then both $M$ and $N$ are of constant curvature. Moreover, there is no totally umbilical submanifold of codimension less than ${\rm rank}\; M -1.$ Hence, Theorem \ref{tbending} leads to the following corollary:
\begin{corollary} Let $(M,g)$ be an $n$-dimensional compact, irreducible symmetric space equipped with a $q$-dimensional almost regular distribution $\sigma.$ Then, we have:
\begin{enumerate}
\item[{\rm (i)}] If $q=1$ or $q= n-1,$ $n\geq 4$ and $(M,g)$ has nonconstant sectional curvature,
\[
B(\sigma) > \frac{\tau}{2n(n-2)}{\rm Vol}(M,g).
\]
\item[{\rm (ii)}] If $n=2q\geq 6$ and ${\rm rank}\;M> q +1,$
\[
B(\sigma) > \frac{1}{n-2}\int_{M}s_{\rm mix}(\sigma)\;dv_{M}.
\]
\end{enumerate}
\end{corollary}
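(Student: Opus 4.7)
The plan is to deduce both inequalities from Theorem~\ref{tbending} by showing that, under the stated hypotheses on the symmetric space, the equality cases there cannot be realised; the two results of Chen quoted just before the corollary will serve as the obstruction. For part~(i) I will additionally use the fact that a compact irreducible symmetric space is Einstein to rewrite the right-hand side of the Theorem~\ref{tbending}(I) bound in terms of the scalar curvature~$\tau$.

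For part~(i) it suffices to treat $q=1,$ since the case $q=n-1$ is symmetric, swapping the roles of $\sigma_{r}$ and $\sigma_{r}^{\bot}.$ Because $(M,g)$ is Einstein, for any local unit $V\in {\mathcal V}$ one has $s_{\rm mix}(\sigma)={\rm Ric}(V,V)=\tau/n,$ so
\[
\int_{M} s_{\rm mix}(\sigma)\,dv_{M} = \frac{\tau}{n}\,{\rm Vol}(M,g),
\]
and Theorem~\ref{tbending}(I) immediately gives $B(\sigma)\geq \frac{\tau}{2n(n-2)}{\rm Vol}(M,g).$ To upgrade to a strict inequality I suppose equality holds. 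Since $n\geq 4,$ the equality clause forces $\sigma_{r}^{\bot}$ to define an umbilical Riemannian foliation on $M_{r},$ whose leaves are totally umbilical hypersurfaces of $M_{r}.$ As total umbilicity is a local condition on the ambient metric, each such leaf is also a totally umbilical hypersurface of the ambient irreducible symmetric space $(M,g),$ and Chen's first result then forces $(M,g)$ to have constant sectional curvature, contradicting the hypothesis.

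For part~(ii) we are in the setting of Theorem~\ref{tbending}(II) with $n=2q\geq 6,$ in particular $q\geq 3.$ The corresponding equality clause requires $\sigma_{r}$ and $\sigma_{r}^{\bot}$ to define umbilical foliations on $M_{r}$ whose leaves have dimension $q,$ hence codimension $n-q=q$ in $M.$ The hypothesis ${\rm rank}\,M>q+1$ rewrites as $q<{\rm rank}\,M-1,$ which is precisely the range of codimensions excluded by Chen's second result. By the same local argument as above, equality would produce totally umbilical submanifolds of $(M,g)$ of this forbidden codimension, so equality cannot hold and the inequality is strict.

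The only delicate point is the local-to-global passage: Theorem~\ref{tbending} formulates its equality conditions on the open dense subset $M_{r}\subseteq M,$ whereas Chen's theorems concern the ambient symmetric space. The key observation is that umbilicity of a leaf is an intrinsic local property depending only on the ambient metric, so any leaf of the putative equality foliation restricted to a small neighbourhood of a point of $M_{r}$ is genuinely a totally umbilical submanifold of $(M,g),$ and Chen's obstructions apply directly; once this is noted, the corollary is a direct consequence of the theorem.
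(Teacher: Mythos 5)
Your proposal is correct and follows exactly the route the paper intends: the paper's entire ``proof'' is the sentence preceding the corollary, namely that Chen's two results (no totally umbilical hypersurface in an irreducible symmetric space of nonconstant curvature, and no totally umbilical submanifold of codimension less than ${\rm rank}\,M-1$) rule out the equality cases of Theorem \ref{tbending}, combined with the Einstein rewriting $\int_M s_{\rm mix}(\sigma)\,dv_M=\frac{\tau}{n}{\rm Vol}(M,g)$ already noted after that theorem. Your extra remarks on the local-to-global passage from $M_r$ to $M$ and on matching the codimension range $q<{\rm rank}\,M-1$ are just careful spellings-out of what the paper leaves implicit.
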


Euclidean spheres, together projective spaces ${\mathbb K}P^{m},$ where ${\mathbb K} =\RR,$ ${\mathbb C}$ or ${\mathbb H},$ and the Cayley plane ${\mathbb C}aP^{2},$ are all the compact rank one symmetric spaces. Denote by $S^{n}(\lambda)$ and $\RR P^{m}(\lambda)$ the sphere and the real projective space of constant curvature $\lambda,$ by ${\mathbb C}P^{m}(\lambda)$ the complex projective space with constant holomorphic sectional curvature $c = 4\lambda,$ by ${\mathbb H}P^{m}(\lambda)$ the quaternionic projective space with constant quaternionic sectional curvature $c = 4\lambda$ and by ${\mathbb C}aP^{2}(\lambda)$ the Cayley plane with constant Cayley sectional curvature $c = 4\lambda.$ We denote by $J$ or $J_{1}$ the complex structure if ${\mathbb K} = {\mathbb C},$ and by $\{J_{1},\dots J_{\nu}\}$ (a local basis of) the quaternionic K\"ahler structure or the Cayley structure, depending on wether ${\mathbb K} = {\mathbb H}$ and $\nu = 3$ or ${\mathbb K} = {\mathbb C}a$ and $\nu = 7.$ 

A distribution $\sigma$ on ${\mathbb K}P^{m}(\lambda),$ ${\mathbb K}\neq \RR,$ is said to be {\em invariant} if $J_{s}{\mathcal V}\subset {\mathcal V},$ for all $s= 1,\dots, \nu.$ Then also $J_{s}{\mathcal H}\subset {\mathcal H}$ and, for each $i = 1,\dots ,l,$ $\dim {\mathcal V}^{i} = (\nu+ 1)\kappa_{i},$ for some $\kappa_{i}\leq m.$
\begin{corollary}\label{pinv} Let $\sigma$ be an invariant $q$-dimensional almost regular distribution on a compact projective space $(M,g) ={\mathbb K}P^{m}(\lambda),$ ${\mathbb K}\neq \RR.$ We have:
\begin{enumerate}
\item[{\rm (I)}] If $q= \frac{n}{2},$ then
\[
B(\sigma) \geq \frac{q^{2}}{n-2}\lambda{\rm Vol}({\mathbb K}P^{m}(\lambda)).
\]
The equality holds if and only if one of the conditions is satisfied:
\begin{enumerate}
\item[{\rm (i)}] if $(M,g) = {\mathbb C}P^{2}(\lambda),$ $\sigma_{r}$ and $\sigma_{r}^{\bot}$ are umbilical distributions.
\item[{\rm (ii)}] if $q\geq 3,$ $\sigma_{r}$ and $\sigma_{r}^{\bot}$ define umbilical foliations on $M_{r}.$
\end{enumerate}
\item[{\rm (II)}] If $1<q< \frac{n}{2}$ $(n\geq 5)$ and $\sigma_{r}$ is an umbilical distribution {\rm (}resp., $\frac{n}{2}<q< n-1$ and $\sigma_{r}^{\bot}$ umbilical{\rm )}, then
\[
B(\sigma) \geq \frac{q(n-q)}{2(n-q-1)}\lambda{\rm Vol}({\mathbb K}P^{m}(\lambda))\;\;\;\;(\mbox{resp.,}\;
B(\sigma) \geq \frac{q(n-q)}{2(q-1)}\lambda{\rm Vol}({\mathbb K}P^{m}(\lambda))).
\]
The equality holds if and only if $\sigma_{r}^{\bot}$ {\rm (}resp., $\sigma_{r})$ defines an umbilical polar foliation on $M_{r}.$
\end{enumerate}
\end{corollary}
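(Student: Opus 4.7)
The plan is to combine the bounds of Theorem~\ref{tbending} with a direct computation showing that the mixed scalar curvature of an invariant distribution on ${\mathbb K}P^m(\lambda)$ is constant on $M_r$. Once I establish $s_{\rm mix}(\sigma) = \lambda\, q(n-q)$ pointwise, both inequalities will follow by integrating over $M$ and substituting into the appropriate parts of Theorem~\ref{tbending}.

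The central computation is the following. The sectional curvature of an orthonormal plane spanned by $X,Y\in T_xM$ on ${\mathbb K}P^m(\lambda)$, ${\mathbb K}\neq \RR$, satisfies
\[
K(X,Y) = \lambda\Bigl(1 + 3\sum_{s=1}^{\nu} g(X,J_sY)^2\Bigr),
\]
with $\nu = 1, 3, 7$ according to ${\mathbb K} = {\mathbb C}, {\mathbb H}, {\mathbb C}a$. I would fix a $({\mathcal V}^i,{\mathcal H}^i)$-adapted local orthonormal frame $\{E_a;E_{q+j}\}$. Since $\sigma$ is invariant, $J_s{\mathcal V}\subset {\mathcal V}$ and, because each $J_s$ is skew-symmetric, also $J_s{\mathcal H}\subset {\mathcal H}$. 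Hence $g(E_a, J_s E_{q+j}) = 0$ for every $a,j,s$, and therefore
\[
s_{\rm mix}(\sigma) = \sum_{a,j} K(E_a, E_{q+j}) = \lambda\, q(n-q),
\]
so that $\int_{M} s_{\rm mix}(\sigma)\,dv_{M} = \lambda\, q(n-q)\,{\rm Vol}({\mathbb K}P^m(\lambda))$.

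Substituting this into Theorem~\ref{tbending}~(II) in the case $q = n/2$ (where $q(n-q)=q^2$) gives $B(\sigma)\geq \frac{q^2}{n-2}\lambda\,{\rm Vol}({\mathbb K}P^m(\lambda))$, and the equality conditions (i)--(ii) in part~(I) of the corollary are inherited verbatim from Theorem~\ref{tbending}~(II). Substituting into Theorem~\ref{tbending}~(III) yields $B(\sigma)\geq \frac{q(n-q)}{2(n-q-1)}\lambda\,{\rm Vol}({\mathbb K}P^m(\lambda))$ whenever $1 < q < n/2$ and $\sigma_{r}$ is umbilical, with the symmetric statement for $n/2 < q < n-1$ and $\sigma_{r}^{\bot}$ umbilical; the equality in part~(II) is exactly that of Theorem~\ref{tbending}~(III), namely that $\sigma_{r}^{\bot}$ (respectively $\sigma_{r}$) defines an umbilical polar foliation on $M_{r}$.

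The main difficulty will be only notational: verifying the sectional curvature formula in the paper's sign convention $R_{XY} = \nabla_{[X,Y]} - [\nabla_X,\nabla_Y]$ and being careful that $\{J_1,\dots ,J_\nu\}$ is only a \emph{local} basis in the cases ${\mathbb K}={\mathbb H},{\mathbb C}a$. However, the quantity $\sum_{s} g(X,J_sY)^2$ is intrinsic (independent of the choice of local basis), and invariance of $\sigma$ under the whole structure bundle guarantees the pointwise cancellation $g(E_a, J_s E_{q+j})=0$, so the identity $s_{\rm mix}(\sigma) = \lambda\, q(n-q)$ is frame-independent and the corollary follows at once.
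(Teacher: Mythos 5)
Your proposal is correct and follows essentially the same route as the paper: both reduce the corollary to the identity $s_{\rm mix}(\sigma)=q(n-q)\lambda$ for invariant distributions and then substitute into Theorem \ref{tbending} (II) and (III). The paper phrases the curvature computation via the eigenvalues of the Jacobi operator $R_{u}$ on ${\mathbb K}P^{m}(\lambda)$ rather than via your sectional-curvature formula, but these are equivalent, and your observation that invariance forces $g(E_{a},J_{s}E_{q+j})=0$ is exactly the point used there.
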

\begin{proof} The Jacobi operator $R_{u}$ on ${\mathbb K}P^{m}(\lambda),$ ${\mathbb K}\neq \RR,$ defined by $R_{u} = R_{u\cdot}u$ for a unit vector $u,$ satisfies \cite{Vanhecke}
\[
R_{u}J_{s}u = 4\lambda J_{s}u,\;\;\; R_{u}X = \lambda X,\;\; X\in \{u,J_{s}u\}^{\bot},
\]
for $s = 1,\dots ,\nu.$ Then, taking an adapted local orthonormal frame of the pair $({\mathcal V},{\mathcal H})$ associated to $\sigma$ on ${\mathbb K}P^{m}(\lambda)$ as
\[
\{V_{1},\dots ,V_{\kappa},J_{1}V_{1},\dots ,J_{1}V_{k},\dots ,J_{\nu}V_{1},\dots ,J_{\nu}V_{\kappa}; \; E_{q+j},\;j = 1,\dots ,n-q\},
\]
where $q = (\nu + 1)\kappa,$ the mixed scalar curvature $s_{\rm mix}(\sigma)$ of $\sigma$ is given by
\[
s_{\rm mix}(\sigma) = \sum_{j=1}^{n-q}\sum_{a=1}^{\kappa}\Big (g(R_{V_{a}}E_{q+j},E_{q +j}) + \sum_{s = 1}^{\nu}g(R_{J_{s}V_{a}}E_{q+j},E_{q+j})\Big ) = q(n-q)\lambda.
\]
Now, the result follows from Theorem \ref{tbending}.
\end{proof}

\section{Energy of some special classes of foliations}\indent
A foliation  ${\mathcal F}= {\mathcal F}_{\sigma}$ on a Riemannian manifold $(M,g),$ determined by a completely integrable smooth distribution $\sigma,$ is said to be {\em Riemannian} if it is a {\em transnormal system}, that is, every geodesic that is orthogonal to one leaf remains orthogonal to all the leaves that it intersects \cite{Molino}. Then, the vertical distribution ${\mathcal V}$ is Riemannian. In the subsection $6.3,$ we shall give a family of examples of non Riemannian foliations whose tangent distributions over their regular points are Riemannian. If $(M,g)$ is complete, the transnormal condition implies that the leaves are equidistant to each other.
\subsection{\sc{Tubular and radial foliations.}} We focus on compact (connected) Riemannian manifolds $(M,g)$ equipped with a codimension one Riemannian foliation ${\mathcal F}$ which contains at least one singular leaf $P$ embedded in $M$ $(P$ may be a point). In terms of the first conjugate locus ${\rm Conj}(P)$ along geodesics orthogonal to $P,$ Bolton \cite{Bolton} shows that there are two possibilities for ${\mathcal F}:$
\begin{enumerate}
\item[{\rm Case I}:] ${\rm Conj}(P) = P$ holds, then $P$ is the unique singular leaf, every orthogonal geodesic to $P$ return to {\rm (}possibly a different point of) $P$ in a constant distance $2\mu,$ and $M$ is diffeomorphic to the closed tube $\bar{T}(P,\mu) := T(P,\mu) \cup  P(\mu).$ The regular leaves are tubes around $P,$ or geodesic spheres if $P$ is a point.
\item[{\rm Case II}:] If ${\rm Conj}(P)\neq P$ and $d(P,{\rm Conj}(P)) = \mu$ then $P$ and ${\rm Conj}(P)= P(\mu)$ are the singular leaves and $M$ is diffeomorphic to $\bar{T}(P,\frac{\mu}{2})\cup \bar{T}({\rm Conj}(P),\frac{\mu}{2}),$ or more generally, to $\bar{T}(P,\frac{\mu}{2} + \nu)\cup \bar{T}({\rm Conj}(P),\frac{\mu}{2}-\nu),$ for each $\nu\in ]-\frac{\mu}{2},\frac{\mu}{2}[.$ Each regular leaf is a tube, or a geodesic sphere, around $P$ or around ${\rm Conj}(P).$
\end{enumerate}
We say that ${\mathcal F}$ is a {\em tubular foliation around} $P$ (or a {\em spherical foliation around} a point $x,$ if $P = \{x\})$ and it will be denoted by ${\got T}_{P}$ (or by ${\got E}_{x}.)$ Note that for Case II, one gets ${\mathcal T}_{P} = {\mathcal T}_{{\rm Conj}(P)}.$ Miyaoka \cite{Miyaoka} proves that there exists a {\em transnormal} function $f:M\to \RR$ such that  the connected components of the level sets $f^{-1}(t)$ are precisely the leaves of ${\mathcal T}_{P}.$ Concretely,  on $T(P,\mu),$ $f$ is given by
\begin{equation}\label{f}
f(x)= \cos\frac{\pi}{\mu}r(x),
\end{equation}
where $r$ is the distance function $r = d(P,\cdot).$ In both cases, the tube $T(P,\mu)$ covers $M$ except for the second {\em focal variety} corresponding to the minimum value $-1$ of $f$ and the tubes $P(r),$ $0<r<\mu,$ are all the regular leaves of ${\mathcal T}_{P}.$ 

From  \cite[Lemma 1]{Wang},  the gradient $\nabla f$ of $f$ determines a one-dimensional and totally geodesic almost regular foliation ${\mathcal R}_{P}$ orthogonal to ${\mathcal T}_{P},$ called the {\em radial foliation}  around $P.$ Its singular leaves are then the points of $P$ and of ${\rm Conj}(P).$ From (\ref{f}), one gets
\begin{equation}\label{gradient}
\nabla f = -\frac{\pi}{\mu}(\sin \frac{\pi}{\mu}r)\nabla r
\end{equation}
on $T(P,\mu))\setminus P.$ As a direct consequence of Gauss lemma, $\nabla r$ is the outward radial unit vector field orthogonal to regular leaves of ${\mathcal T}_{P}.$  Denote by $S$ the shape operator (with respect to $\nabla r)$ of the regular leaves $\{P(r)\}_{0<r<\mu}$ of ${\mathcal T}_{P}$ and by $\alpha_{a},$ $a = 1,\dots ,n-1,$ their eigenvalue functions.
\begin{proposition} \label{pone}The tubular and radial foliations ${\mathcal T}_{P}$ and ${\mathcal R}_{P}$ around an embedded submanifold $P$ and the radial vector field $\nabla r$ on $T(P,\mu)\setminus P$ have the same total bending and it is given by
\begin{equation}\label{bendingone}
B({\mathcal T}_{P}) = B({\mathcal R}_{P}) = \frac{1}{2}\sum_{a=1}^{n-1}\int_{0}^{\mu}\Big (\int_{P(r)}\alpha_{a}^{2}\;dv_{P(r)}\Big )dr.
\end{equation}
\end{proposition}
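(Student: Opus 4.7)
The three equalities $B(\mathcal{T}_P) = B(\mathcal{R}_P) = B(\nabla r)$ follow immediately from general principles. From (\ref{gradient}), the tangent distribution of $\mathcal{R}_P$ on the regular set $T(P,\mu)\setminus P$ is spanned by the unit vector field $\nabla r$, so $\mathcal{R}_P$ and $\nabla r$ determine the same one-dimensional distribution and hence the same intrinsic torsion. Moreover, the tangent distributions of $\mathcal{T}_P$ and $\mathcal{R}_P$ are orthogonal complements of each other on $T(P,\mu)\setminus P$, and as noted in Section~3 (the identity ${\mathcal E}_{M(M_{s},\vec\varepsilon)}(\sigma_{r}) = {\mathcal E}_{M(M_{s},\vec\varepsilon)}(\sigma_{r}^{\bot})$), the total bending is invariant under interchange of a distribution with its orthogonal one; this gives $B(\mathcal{T}_P) = B(\mathcal{R}_P)$. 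The singular sets of the three objects coincide (the points of $P$ and of ${\rm Conj}(P)$) and are of measure zero, so there is no obstruction to identifying the integrals in (\ref{energyf}).

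The plan for the explicit formula is to work with $\mathcal{R}_P$, whose vertical distribution ${\mathcal V}$ is one-dimensional and spanned by $\nabla r$. On $T(P,\mu)\setminus P$ I choose an adapted orthonormal frame $\{E_{0}=\nabla r;\,E_{1},\dots ,E_{n-1}\}$ in which $E_{1},\dots ,E_{n-1}$ are principal directions of the shape operator $S$ of $P(r)$ with eigenvalues $\alpha_{1},\dots ,\alpha_{n-1}$. Since the integral curves of $\nabla r$ are unit-speed geodesics, $\nabla_{\nabla r}\nabla r = 0$ and therefore $\xi_{\nabla r}\nabla r = 0$ and $\xi_{\nabla r}E_{a} = p_{\mathcal V}(\nabla_{\nabla r}E_{a}) = g(\nabla_{\nabla r}E_{a},\nabla r)\nabla r = -g(E_{a},\nabla_{\nabla r}\nabla r)\nabla r = 0$. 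The Weingarten identity gives $\nabla_{E_{a}}\nabla r = -\alpha_{a}E_{a}\in {\mathcal H}$, hence by (\ref{xicoor})
\[
\xi_{E_{a}}\nabla r = -\alpha_{a}E_{a},\qquad \xi_{E_{a}}E_{b} = g(\nabla_{E_{a}}E_{b},\nabla r)\nabla r = -g(E_{b},\nabla_{E_{a}}\nabla r)\nabla r = \alpha_{a}\delta_{ab}\nabla r.
\]
Thus $\xi_{jk}^{1}=\alpha_{j}\delta_{jk}$ in the notation used to define $\Sigma_{\mathcal V},\Sigma_{\mathcal H}$, so $\Sigma_{\mathcal V}=0$ and $\Sigma_{\mathcal H}=\sum_{a}\alpha_{a}^{2}$, giving $\|\xi\|^{2}=2\sum_{a}\alpha_{a}^{2}$.

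To conclude, I integrate over $M$ using geodesic polar coordinates about $P$. The map sending a point of the unit normal bundle of $P$ together with a parameter $r\in(0,\mu)$ to $\exp(r\cdot)$ parametrizes $T(P,\mu)\setminus P$ and covers $M$ up to the focal variety, which has measure zero in both Case~I and Case~II. The volume form decomposes as $dv_{M}=dr\wedge dv_{P(r)}$, since $\nabla r$ is a unit vector field orthogonal to each regular leaf $P(r)$. Substituting yields
\[
B(\mathcal{R}_{P}) = \frac{1}{4}\int_{M}\|\xi\|^{2}\,dv_{M} = \frac{1}{2}\sum_{a=1}^{n-1}\int_{0}^{\mu}\!\left(\int_{P(r)}\alpha_{a}^{2}\,dv_{P(r)}\right)dr,
\]
which is (\ref{bendingone}). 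The only non-routine point of the plan is justifying the Fubini decomposition of $dv_{M}$ on the whole of $T(P,\mu)\setminus P$, but this is a standard consequence of Gauss' lemma for tubes around submanifolds; the purely algebraic computation of $\xi$ is immediate once the adapted frame is in place.
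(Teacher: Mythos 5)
Your proposal is correct and follows essentially the same route as the paper's own (much terser) proof: choose an adapted orthonormal frame whose horizontal part consists of principal directions of the shape operator of the tubes, compute $\|\xi\|^{2}=2\sum_{a}\alpha_{a}^{2}$, and integrate via Fubini's theorem using the decomposition $dv_{M}=dr\wedge dv_{P(r)}$. The extra care you take in identifying $B(\mathcal{T}_{P})=B(\mathcal{R}_{P})=B(\nabla r)$ via the invariance of the total bending under passage to the orthogonal distribution is exactly the justification the paper leaves implicit.
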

\begin{proof} Taking local orthonormal frames $\{E_{1},\dots ,E_{n-1}; E_{n}= \nabla r\},$ where $E_{1},\dots ,E_{n-1}$ are eigenvectors of $S,$ one directly obtains that $\|\xi\|^{2} = 2\sum_{a=1}^{n-1}\alpha_{a}^{2}$ on $T(P,\mu))\setminus P.$ Hence, using the Fubini's theorem, we obtain (\ref{bendingone}).
\end{proof}
From Proposition \ref{pone} and in accordance with \cite{BGV}, the list of all {\em isoparametric} radial foliations on compact rank one symmetric spaces around points or around totally geodesic submanifolds are given in Table \ref{table1}, where their singular leaves and focal varieties, together with the explicit expressions for their total bendings, are determined. For these Riemann\-ian foliations, the functions $\alpha_{a}$ on each tube $P(r)$ are constant, so they only depend on $r,$ and formula (\ref{bendingone}) reduces to
\[
B({\got R}_{P}) = \frac{1}{2}\sum_{a=1}^{n-1}\int_{0}^{\mu}A^{M}_{P}(r)\alpha_{a}^{2}(r)dr,
\]
where $A^{M}_{P}(r)$ denotes the $(n-1)$-dimensional volume of the tubular hypersurface $P(r).$ All geodesics in ${\mathbb K}P^{m}(\lambda)$ are periodic with the same length ${\it l} = \pi/\sqrt{\lambda}.$ Because radial foliations in Table \ref{table1} are all Case II, we have that $\mu = \pi/2\sqrt{\lambda},$ except for:
\begin{enumerate}
\item[{\rm (i)}] radial foliations around a point $x$ in $\RR P^{m}(\lambda).$ Its focal variety is the regular leaf, known as {\em exceptional}, isometric to $\RR P^{m-1}(\lambda).$ Then, $\mu$ also takes the va\-lues $\pi/2\sqrt{\lambda}.$
\item[{\rm (ii)}] radial foliations around $\RR P^{m}(\lambda)$ (resp., ${\mathbb C}P^{m}(\lambda))$ embedded as totally geo\-de\-sic submanifold of ${\mathbb C}P^{m}(\lambda)$ (resp., ${\mathbb H}P^{m}(\lambda)).$ The geodesics orthogonal to these sub\-ma\-ni\-folds cut them in two points at a distance $\pi/2\sqrt{\lambda}$ and so $\mu = \pi/4\sqrt{\lambda}.$
\end{enumerate}
{\footnotesize
\begin{table}[htb]
\caption{\footnotesize Total bending of isoparametric radial foliations on compact rank one symmetric spaces around points or totally geodesic submanifolds}
\label{table1}
\setlength\arraycolsep{2pt}
\[
\begin{array}{|l|l|l|c|}  \hline
(M,g)                 & \mathrm{Focal}\;\mathrm{varieties}                       & \mathrm{Singular}\;\mathrm{leaves} & B({\mathcal R}_{P})/\mathrm{Vol}(M,g)\\ \hline \hline
S^{m}(\lambda)  & \{x\},\;\{-x\}                              & \{x\},\;\{-x\}                  &    \frac{m-1}{2(m-2)}\lambda \\
(m\geqslant 3) &   S^{m-1}(\lambda),\;\{x,-x\}        & \{x\},\;\{-x\}   &  \frac{m-1}{2(m-2)}\lambda \\
                       & S^{m-p-1}(\lambda), \; S^{q}(\lambda)& S^{m-p-1}(\lambda), \; S^{q}(\lambda)  & \frac{(m-1)[4\delta^{2}-(m-1)^{2} +4]}{2[4\delta^{2}-(m-3)^{2}]}\lambda\\
                       &  (1<p<m-2)                           &                                           &                        \\
                       & S^{m-2}(\lambda),\;S^{1}(\lambda)    & S^{m-2}(\lambda),\;S^{1}(\lambda)   &   \infty \\\hline

\RR P^{m}(\lambda)     & \RR P^{m-1}(\lambda),\;\{x\}                  & \{x\}                    &   \frac{m-1}{2(m-2)}\lambda \\
(m\geqslant 3) & \RR P^{m-p-1}(\lambda), \; \RR P^{p}(\lambda)& \RR P^{m-q-1}(\lambda), \; \RR P^{q}(\lambda) & \frac{(m-1)[4\delta^{2}-(m-1)^{2} +4]}{2[4\delta^{2}-(m-3)^{2}]}\lambda\\
                       &  (1<p<m-2)                           &                                                    &               \\
                       & \RR P^{m-2}(\lambda),\;\RR P^{1}(\lambda)    & \RR P^{m-2}(\lambda),\;\RR P^{1}(\lambda)    &   \infty \\ \hline

{\mathbb C}P^{m}(\lambda)& {\mathbb C}P^{m-1}(\lambda),\;\{x\}        & {\mathbb C}P^{m-1}(\lambda),\;\{x\}          &  \infty \\
 (m\geqslant 2) &   &     &  \\

    & {\mathbb C}P^{m-p-1}(\lambda), \; {\mathbb C}P^{p}(\lambda)& {\mathbb C}P^{m-p-1}(\lambda), \; {\mathbb C}P^{p}(\lambda)& \frac{(m-1)[4\delta^{2}-(m^{2} +1)]}{4\delta^{2}-(m-1)^{2}}\lambda\\
                       &  (1\leq p\leq m-2)                           &                                           &                        \\
                       & \RR P^{m}(\lambda),\;Q                       & \RR P^{m}(\lambda)                    &   \infty \\\hline

{\mathbb H}P^{m}(\lambda)& {\mathbb H}P^{m-1}(\lambda),\; \{x\} & {\mathbb H}P^{m-1}(\lambda),\; \{x\} &  \frac{6m^{2}-5m +2}{2m-1}\lambda\\

                   &{\mathbb H}P^{m-p-1}(\lambda), \; {\mathbb H}P^{p}(\lambda)& {\mathbb H}P^{m-p-1}(\lambda), \; {\mathbb H}P^{p}(\lambda)  &  \frac{8(m-1)\delta^{2}-m(2m^{2} +1)}{4\delta^{2}-m^{2}}\lambda\\
                       &  (1\leq p\leq m-2)                    &                                            &                      \\
                       & {\mathbb C}P^{m}(\lambda),\;\tilde{Q}        & {\mathbb C}P^{m}(\lambda)           &  \frac{10m^{2}-6m-1}{m-1}\lambda\\
                       & (m\geqslant 2)                               &                                           &                        \\ \hline
{\mathbb C}a\;P^{2}(\lambda) & \{x\},\;L & \{x\},\;L               &  \frac{139}{21}\lambda \\\hline

\end{array}
\]
$\delta = ((m-1)/2)-p.$ $Q$ (resp., $\tilde{Q})$ is the tube around $\RR P^{m}(\lambda)$ (resp., ${\mathbb C}P^{m}(\lambda))$ in ${\mathbb C}P^{m}(\lambda)$ (resp., ${\mathbb H}P^{m}(\lambda))$ of radius $\pi/4\sqrt{\lambda}.$
\end{table}}

\begin{theorem}\label{Thsph} The radial foliation ${\got R}_{x}$ {\rm (}resp., the spherical foliation ${\mathcal E}_{x})$ around a point $x$ on $(M,g) = S^{n}(\lambda)$ or $(M,g) = \RR P^{n}(\lambda),$ for $n\geq 3,$ is an absolute minimum for the energy functional on the set of all one-dimensional {\rm (}resp., codimension one{\rm )} almost regular distributions and its total bending is given by
\[
B({\got R}_{x}) = B({\mathcal E}_{x}) = \frac{(n-1)}{2(n-2)}\lambda{\rm Vol}(M,g).
\]
Moreover, for $n\geq 4,$ tangent distributions to radial {\rm (}resp., spherical{\rm )} foliations around points are the only one-dimensional {\rm(}resp., codimension one{\rm )} almost regular distributions to minimize the energy.
\end{theorem}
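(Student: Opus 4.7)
The plan is to invoke Theorem~\ref{tbending}(I) specialized to a space of constant curvature and then characterize its equality case.

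First I would specialize the lower bound. Since $(M,g)$ has constant sectional curvature $\lambda$, its Ricci tensor equals $(n-1)\lambda\,g$, so for any one-dimensional almost regular distribution $\sigma$ with local unit generator $V$, the mixed scalar curvature (which on a one-dimensional distribution equals ${\rm Ric}(V,V)$, as recalled at the start of Section~4) is the constant $(n-1)\lambda$; the same value is obtained in the codimension one case with $V$ a local unit normal. Substituting into~(\ref{q1}) gives
\[
B(\sigma)\;\geq\;\frac{1}{2(n-2)}\int_{M}s_{\rm mix}(\sigma)\,dv_{M}\;=\;\frac{n-1}{2(n-2)}\lambda\,{\rm Vol}(M,g),
\]
which is the stated lower bound.

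Next I would verify that $\got{R}_x$ and ${\mathcal E}_x$ attain it. The leaves of ${\mathcal E}_x$ are the geodesic spheres $S(x,r)$, which in a space form are totally umbilical, while the orthogonal distribution is spanned by the radial field $\nabla r$, which is geodesic. Thus, for $n=3$ the pair $(\sigma_r,\sigma_r^{\perp})$ lands in case (I)(i) of Theorem~\ref{tbending}, and for $n\geq 4$ it lands in case (I)(ii), since ${\mathcal E}_x$ is a Riemannian foliation (it is the level-set foliation of the distance function to $x$). In either case equality holds, which simultaneously yields $B(\got{R}_x)=B({\mathcal E}_x)=\frac{n-1}{2(n-2)}\lambda\,{\rm Vol}(M,g)$.

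For the uniqueness statement with $n\geq 4$, assume $\sigma$ attains the bound. Theorem~\ref{tbending}(I)(ii) then forces the codimension one foliation---that is $\sigma_r^{\perp}$ if $q=1$ and $\sigma_r$ if $q=n-1$---to be an umbilical Riemannian foliation on $M_r$. Each of its leaves is an umbilical hypersurface of a space form, hence, by the Codazzi equation, of constant mean curvature and an open subset either of a totally geodesic hypersurface or of a geodesic sphere. The transnormal/Riemannian condition makes the leaves pairwise equidistant along their common orthogonal geodesics. Because in $S^n(\lambda)$ or $\RR P^n(\lambda)$ the hypersurfaces parallel to a geodesic sphere are concentric geodesic spheres (and those parallel to a great sphere are geodesic spheres around its pole), all leaves must be concentric geodesic spheres around some $x_0\in M$. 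Thus the codimension one foliation coincides with ${\mathcal E}_{x_0}$ and, correspondingly, $\sigma$ is tangent either to ${\mathcal E}_{x_0}$ or, in the $q=1$ case, to its orthogonal radial foliation $\got{R}_{x_0}$.

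The step I expect to be the main obstacle is this final rigidity: extracting a common center from an abstract umbilical Riemannian foliation. It requires combining the constant-curvature classification of umbilical hypersurfaces with a careful propagation argument along the orthogonal geodesic flow, using Lemma~\ref{lconnected} (via the standing assumption $\dim P_{\beta}\leq n-2$) to guarantee that $M_r$ is connected so that the center propagates consistently across the whole manifold.
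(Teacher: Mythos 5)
Your proposal follows essentially the same route as the paper: specialize the lower bound of Theorem \ref{tbending}(I) to constant curvature (where $s_{\rm mix}(\sigma)=(n-1)\lambda$ for $q=1$ or $q=n-1$), observe that geodesic spheres are totally umbilical so the radial and spherical foliations attain equality, and for $n\geq 4$ reduce uniqueness to the fact that the concentric round spheres give the only umbilical Riemannian codimension-one foliation of $S^n(\lambda)$, transferred to $\RR P^n(\lambda)$ via the two-fold covering. The only difference is that the paper simply cites this last rigidity fact as known, whereas you sketch a proof of it; your sketch is plausible and consistent with the paper's argument.
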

\begin{proof} On Riemannian manifolds of constant curvature $\lambda,$ the mixed scalar curvature $s_{\rm mix}(\sigma)$ of any $q$-dimensional almost regular distribution $\sigma$ is given by $s_{\rm mix}(\sigma) = q(n-q)\lambda.$ Because the geodesic spheres around points on $S^{n}(\lambda)$ or on $\RR P^{n}(\lambda)$ are totally umbilical hypersurfaces, the first part of the theorem follows directly from Theorem \ref{tbending} (I). For the last part, we use that the foliation whose leaves are round spheres at a constant geodesic distance is the unique codimension one foliation in $S^{n}(\lambda)$ which is Riemannian and totally umbilical. Since $\RR P^{n}(\lambda)$ is obtained from $S^{n}(\lambda)$ by identifying antipodal points, with Riemannian metric such that the two-fold covering map $\tilde{\pi}: S^{n}(\lambda)\to \RR P^{n}(\lambda)$ is a Riemannian submersion, the same result holds for $\RR P^{m}(\lambda).$
\end{proof}

\subsection{\sc{Complex radial foliations.}} Let ${\mathcal T}_{P}$ be a tubular foliation around an embedded submanifold $P$ ($P$ may be a point) in an almost Hermitian manifold $(M,g,J).$ The subset of vector fields $\{ \nabla f, J\nabla f\},$ where $f$ is defined as in (\ref{f}), spans a two-dimensional almost regular invariant distribution called the {\em complex radial} distribution around $P.$ The {\em Hopf vector field} $V = -J\nabla r$ of each tube $P(r),$ $0<r<\mu,$ around $P$ defines a smooth vector field on $T(P,\mu)\setminus P$ belonging to the complex radial distribution. On complex space forms, $V$ is a principal curvature vector field on each $P(r)$ (see \cite{Vanhecke}) and so, $P(r)$ is a nice example of {\em Hopf hypersurface}.

 \begin{proposition}\label{pcomplex} If $(M,g,J)$ is nearly K\"ahler and the regular leaves $P(r),$ $0<r<\mu,$ of ${\mathcal T}_{P}$ are Hopf hypersurfaces, then the complex radial distribution around $P$ is completely integrable and it determines a totally geodesic two-dimensional almost regular invariant foliation with the same singular leaves than those of the radial foliation ${\mathcal R}_{P}.$
\end{proposition}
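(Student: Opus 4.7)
The plan is to work on the open set $T(P,\mu)\setminus P$ with the local frame $N:=\nabla r$ (the outward radial unit vector) and $V:=-JN$ (the Hopf vector field of the tubes $P(r)$). On this open set, by (\ref{gradient}) the vector field $\nabla f$ is a nowhere-vanishing scalar multiple of $N$, so the complex radial distribution $\mathcal{D}$ is simply $\mathrm{span}\{N,V\}$. I would prove both the integrability and the totally-geodesic claims simultaneously by checking that $\nabla_Z W\in\mathcal{D}$ for every $Z,W\in\{N,V\}$: this is stronger than involutivity (so $\mathcal{D}$ is completely integrable on its constant-rank domain), and it is exactly the leaf-wise totally geodesic condition.

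Three of the four derivatives are essentially immediate. Since the integral curves of $\nabla r$ are unit-speed geodesics leaving $P$ orthogonally, $\nabla_N N=0$. The nearly Kähler identity $(\nabla_X J)X=0$, specialised at $X=N$, gives $(\nabla_N J)N=0$, whence
\[
\nabla_N V=-(\nabla_N J)N-J\nabla_N N=0.
\]
The Hopf-hypersurface hypothesis says precisely that $V$ is a principal direction of the shape operator $S$ of each regular leaf $P(r)$, so $\nabla_V N=\alpha V$ for some smooth function $\alpha$.

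The only step needing care is $\nabla_V V=-(\nabla_V J)N-J\nabla_V N$. Here I would invoke the polarised nearly Kähler identity $(\nabla_V J)N=-(\nabla_N J)V$ together with the standard consequence of $J^{2}=-\mathrm{Id}$ that $(\nabla_X J)\circ J=-J\circ(\nabla_X J)$, chained as
\[
(\nabla_V J)N=-(\nabla_N J)(-JN)=(\nabla_N J)(JN)=-J(\nabla_N J)N=0.
\]
Since $JV=-J^{2}N=N$, this yields $\nabla_V V=-J(\alpha V)=-\alpha N\in\mathcal{D}$, completing the verification that $\nabla_{\mathcal{D}}\mathcal{D}\subset\mathcal{D}$.

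The statement about the singular leaves is then immediate from (\ref{gradient}): the factor $\sin(\pi r/\mu)$ vanishes exactly on $P$ and on $\mathrm{Conj}(P)$, so the complex radial distribution degenerates on the same set as the radial foliation $\mathcal{R}_{P}$. The one delicate point of the whole argument is the chained identity $(\nabla_V J)N=0$, which uses both ingredients of the nearly Kähler structure (pointwise vanishing $(\nabla_N J)N=0$ and the $J$-anticommutation of $\nabla J$); the computation would collapse in a merely almost Hermitian setting, so this is where the nearly Kähler hypothesis is genuinely used.
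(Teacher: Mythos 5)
Your computation is correct and its core is the same as the paper's: both proofs differentiate the two spanning fields, use $(\nabla_X J)X=0$ to control the derivatives along the radial direction, and use the Hopf condition $SJ\nabla r=\alpha J\nabla r$ to control the derivative of the normal along the Hopf direction. The differences are worth noting. First, the paper works with the globally defined fields $\nabla f$ and $J\nabla f$ rather than your unit frame $\{N,V\}$, and it settles complete integrability by observing that $\{\nabla f,J\nabla f\}$ is a \emph{locally of finite type} family and invoking Sussmann's Theorem 8.1; you instead get integrability from Frobenius on the constant-rank open set. Since the paper itself stresses that involutivity does not imply complete integrability for generalized distributions, your route leaves a (small) point to be made explicit: at the singular points $P\cup{\rm Conj}(P)$ the distribution is the zero subspace, so the points themselves are integral manifolds and complete integrability in the Stefan--Sussmann sense does follow; this is worth one sentence, and Sussmann's criterion is the more robust tool if the singular fibres were not zero-dimensional. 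Second, you verify all four derivatives $\nabla_Z W$ for $Z,W\in\{N,V\}$, including $\nabla_V V=-\alpha N$ via the chained identity $(\nabla_V J)N=0$; the paper only records $\nabla_{\nabla f}\nabla f$, $\nabla_{\nabla f}J\nabla f$ and $\nabla_{J\nabla f}\nabla f$ before asserting the distribution is geodesic, so your extra computation actually supplies the missing symmetric component $h_{\mathcal V}(J\nabla f,J\nabla f)$ and makes the totally geodesic claim fully explicit.
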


\begin{proof}  Because $\nabla_{\nabla r} \nabla r = 0,$ it follows from (\ref{f}) and (\ref{gradient}) that
\[
\nabla_{\nabla f}\nabla f = -\frac{\pi}{\mu}\nabla r(\sin \frac{\pi}{\mu}r)\nabla f = -\left(\frac{\pi}{\mu}\right )^{2}f\nabla f.
\]
Hence, using that $(M,g,J)$ is nearly K\"ahler, we have $\nabla_{\nabla f}J\nabla f = -\left(\frac{\pi}{\mu}\right)^{2}fJ\nabla f$ and since $(J\nabla f)(r) = 0$ and $SJV = \alpha JV,$ for some smooth function $\alpha$ on $M,$ one gets
\[
\nabla_{J\nabla f}\nabla f = -\frac{\pi}{\mu}\sin \frac{\pi}{\mu}r\nabla_{J\nabla f}\nabla r = \frac{\pi}{\mu}\sin \frac{\pi}{\mu}rSJ\nabla f = (\frac{\pi}{\mu}\alpha\sin \frac{\pi}{\mu}r)J\nabla f.
\]
So, the complex radial distribution is geodesic. Moreover, we have
\[
[\nabla f,J\nabla f] = -\frac{\pi}{\mu}(\frac{\pi}{\mu}f + \alpha\sin \frac{\pi}{\mu}r)J\nabla f.
\]
Then $\{ \nabla f, J\nabla f\}$ is a {\em locally of finite type} subset of vector fields and, from \cite[Theorem 8.1]{Suss}, the complex radial distribution is completely integrable. Clearly, the points $P\cup{\rm Conj}(P)$ are all the singular leaves. 
\end{proof}

Denote by $\sigma_{P}^{\mathbb C}$ the complex radial distribution and by ${\mathcal R}^{\mathbb C}_{P}$ the corresponding {\em complex radial foliation}. 
\begin{lemma} On ${\mathbb C}P^{m}(\lambda),$ and for each $x\in {\mathbb C}P^{m}(\lambda),$ the orthogonal distribution $(\sigma^{\mathbb C}_{P})^{\bot}_{r}$ of $(\sigma^{\mathbb C}_{P})_{r}$ is umbilical.
\end{lemma}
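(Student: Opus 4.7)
The plan is to verify the umbilical condition $h_{\mathcal H}(X,Y) = \frac{g_{M_r}(X,Y)}{n-q} H_{\mathcal H}$ for the horizontal distribution ${\mathcal H}=(\sigma^{\mathbb C}_P)^{\bot}_r$ by an explicit local computation on the regular tube $T(\{x\},\mu)\setminus\{x\}$. On this set the vertical distribution ${\mathcal V}=\sigma^{\mathbb C}_P$ admits the orthonormal frame $\{N, V\}$ with $N=\nabla r$ and Hopf field $V=-JN$, while ${\mathcal H}$ coincides with the maximal complex subbundle ${\mathcal C}=V^{\bot}\cap TP(r)$ of each geodesic sphere $P(r)$. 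The structural input is the classical fact that the geodesic spheres of radius $r$ in ${\mathbb C}P^m(\lambda)$ are Hopf hypersurfaces whose shape operator $S$ with respect to $N$ preserves ${\mathcal C}$ and satisfies $S\vert_{{\mathcal C}}=\beta(r)\,{\rm Id}$, with $\beta(r)=\sqrt{\lambda}\cot(\sqrt{\lambda}\,r)$.

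First I would use the K\"ahler condition $\nabla J=0$ on ${\mathbb C}P^m(\lambda)$ to obtain, for $X\in TP(r)$, $\nabla_X V = -J\nabla_X N = -JSX$; in particular $\nabla_X V=-\beta JX$ whenever $X\in{\mathcal C}$. Next, for $X,Y\in{\mathcal C}$ the relations $g(Y,N)=g(Y,V)=0$ give after differentiating
\[
g(\nabla_X Y,N)=-g(Y,SX)=-\beta\,g(X,Y),\qquad g(\nabla_X Y,V)=-g(Y,\nabla_X V)=\beta\,g(JX,Y),
\]
so $p_{\mathcal V}(\nabla_X Y)=-\beta\,g(X,Y)\,N+\beta\,g(JX,Y)\,V$.

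Then, by (\ref{xicoor}) the second fundamental form is $h_{\mathcal H}(X,Y)=\tfrac12(p_{\mathcal V}(\nabla_X Y)+p_{\mathcal V}(\nabla_Y X))$. The components along $V$ cancel because $g(JX,Y)+g(JY,X)=0$ by the skew-symmetry of $J$ with respect to the Hermitian metric, leaving
\[
h_{\mathcal H}(X,Y)=-\beta\,g(X,Y)\,N.
\]
Tracing over an orthonormal basis of ${\mathcal C}$ yields $H_{\mathcal H}=-(2m-2)\beta\,N$, and since $n-q=2m-2$ this gives $h_{\mathcal H}(X,Y)=\tfrac{g(X,Y)}{n-q}H_{\mathcal H}$, which is exactly the umbilical condition stated in Section~2.

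The main obstacle is only the structural input: one must know that the relevant hypersurfaces have a single principal curvature along ${\mathcal C}$. For geodesic spheres (the case $P=\{x\}$) this follows from the standard computation of the shape operator of metric spheres in complex space forms, and is what allows the K\"ahler identity $\nabla_X V=-\beta JX$ above to collapse the $V$-component of $h_{\mathcal H}$. Once this is in hand, the rest is formal: the skew-symmetry of $J$ does the real work.
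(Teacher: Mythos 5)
Your proof is correct and follows essentially the same route as the paper: both rest on the fact that the shape operator of the geodesic sphere $S(x,r)$ in ${\mathbb C}P^{m}(\lambda)$ acts as a single scalar $\pm\sqrt{\lambda}\cot(\sqrt{\lambda}r)$ on the maximal complex subbundle ${\mathcal C}=(\sigma^{\mathbb C}_{x})^{\bot}_{r}$, and both use the K\"ahler identity to see that the components of the second fundamental form of ${\mathcal H}$ along $J\nabla r$ are skew-symmetric and so satisfy the umbilicity conditions $\xi^{a}_{jk}=-\xi^{a}_{kj},$ $\xi^{a}_{jj}=\xi^{a}_{kk}.$ The only difference is presentational (you symmetrize to compute $h_{\mathcal H}$ invariantly, the paper lists the nonvanishing components of $\xi$), and your sign for the principal curvature is just the opposite convention for the shape operator, which does not affect the conclusion.
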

\begin{proof} Given a unit-speed geodesic $\gamma$ starting at $x = \gamma(0),$ there exists a parallel frame field $\{E_{1},\dots, E_{n}\}$ along $\gamma$ such that $E_{n-1} = J\gamma'$ and $E_{n} = \gamma',$ and where the vectors $E_{i}(r),$ $i = 1,\dots ,n-1,$ are eigenvectors of the shape operator of the small geodesic $S(x,r)$ in ${\mathbb C}P^{m}(\lambda)$ at $\gamma(r).$ The corresponding eigenvalues $\alpha_{i}(r)$ are given by
$\alpha_{1} = \dots = \alpha_{n-2} = \alpha,$ where $\alpha = -\sqrt{\lambda}\cot(\sqrt{\lambda}r),$ and $\alpha_{n-1} = -2\sqrt{\lambda}\cot(2\sqrt{\lambda}r)$ (see \cite{Vanhecke}). Then the non-vanishing components of the intrinsic torsion $\xi$ of $\sigma^{\mathbb C}_{x}$ are 
\[
g(\xi_{E_{j}}E_{j},\gamma'(r)) = g(\xi_{E_{j}}JE_{j},J\gamma'(r)) = -g(\xi_{JE_{j}}E_{j},J\gamma'(r)) = \alpha,
\]
for $j = 1,\dots,n-2.$ It proves that $(\sigma^{\mathbb C}_{P})^{\bot}_{r}$ is umbilical.
\end{proof}
From  Corollary \ref{pinv} (I)(i), we have the following result.
\begin{theorem} The complex radial foliation ${\mathcal R}_{x}^{\mathbb C}= {\mathcal R}_{{\mathbb C}P^{1}(\lambda)}^{\mathbb C}$ in the complex projective plane ${\mathbb C}P^{2}(\lambda)$ is an absolute minimum for the energy functional on the set of all two-dimensional invariant almost regular distributions and
\[
B({\mathcal R}^{\mathbb C}_{x}) = 2\lambda {\rm Vol}({\mathbb C}P^{2}(\lambda)).
\]
\end{theorem}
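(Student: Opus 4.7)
The plan is to apply Corollary \ref{pinv}(I)(i) with $n=4$ and $q=n/2=2$ to the complex radial distribution $\sigma^{\mathbb{C}}_x$ on ${\mathbb C}P^2(\lambda)$. Since the singular set of $\sigma^{\mathbb{C}}_x$ is $\{x\}\cup {\rm Conj}(x) = \{x\}\cup {\mathbb C}P^1(\lambda)$, both components have dimension $\leq n-2 = 2$, so $\sigma^{\mathbb{C}}_x$ falls within the class of almost regular distributions treated in Sections~4--5. Note also that this observation simultaneously identifies ${\mathcal R}^{\mathbb C}_x$ with ${\mathcal R}^{\mathbb C}_{{\mathbb C}P^1(\lambda)}$, since the complex radial distribution around a point coincides with the one around its focal variety.

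The first step is to confirm that $\sigma^{\mathbb{C}}_x$ is a two-dimensional invariant almost regular distribution: by construction it is spanned, on $T(x,\pi/(2\sqrt{\lambda}))\setminus\{x\}$, by $\nabla f$ and $J\nabla f$, it is visibly $J$-invariant, and it has constant rank $2$ on its regular set. The second step is to verify the two umbilical conditions demanded for equality in Corollary \ref{pinv}(I)(i). Proposition \ref{pcomplex} provides the first: on ${\mathbb C}P^2(\lambda)$ the small geodesic spheres around $x$ are Hopf hypersurfaces, so $\sigma^{\mathbb{C}}_x$ is completely integrable and totally geodesic, which in particular forces $h_{\mathcal V}=0$ and hence renders $(\sigma^{\mathbb{C}}_x)_r$ umbilical. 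The lemma immediately preceding the theorem supplies the second: $(\sigma^{\mathbb{C}}_x)^{\bot}_r$ is umbilical on ${\mathbb C}P^m(\lambda)$ for every $m\geq 2$.

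With both umbilical conditions satisfied, the equality clause of Corollary \ref{pinv}(I)(i) applies, and yields
\[
B({\mathcal R}^{\mathbb C}_x) \;=\; \frac{q^{2}}{n-2}\lambda\,{\rm Vol}({\mathbb C}P^2(\lambda)) \;=\; 2\lambda\,{\rm Vol}({\mathbb C}P^2(\lambda)).
\]
Because the same corollary gives this expression as a lower bound for the total bending (equivalently, energy modulo the constant term $\tfrac{n}{2}{\rm Vol}(M,g)$) of \emph{every} two-dimensional invariant almost regular distribution on ${\mathbb C}P^2(\lambda)$, and ${\mathcal R}^{\mathbb C}_x$ attains it, ${\mathcal R}^{\mathbb C}_x$ is an absolute minimum of the energy functional over this class.

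The main (minor) obstacle is purely bookkeeping: one must check that the hypotheses of Corollary \ref{pinv}(I)(i), which are stated for distributions of the dimension of the ambient manifold having $q=n/2$ and where $(M,g) = {\mathbb C}P^2(\lambda)$, are literally met, and that the finiteness of $B({\mathcal R}^{\mathbb C}_x)$ needed for the equality case is ensured by the umbilicity conditions themselves (which make the chain of inequalities in the proof of Theorem \ref{tbending} saturate). All heavy lifting---the integrable and totally geodesic character of $\sigma^{\mathbb C}_x$, the umbilicity of the orthogonal distribution, and the integral formula (\ref{int1})---is already in place in the preceding sections.
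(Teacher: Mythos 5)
Your proposal is correct and follows essentially the same route as the paper, whose entire proof is the single sentence ``From Corollary \ref{pinv} (I)(i), we have the following result,'' relying implicitly on Proposition \ref{pcomplex} (the complex radial distribution is totally geodesic, hence $(\sigma^{\mathbb C}_{x})_{r}$ is umbilical) and on the preceding lemma (umbilicity of $(\sigma^{\mathbb C}_{x})^{\bot}_{r}$), exactly the two ingredients you assemble. Your additional remarks --- that the singular set $\{x\}\cup{\mathbb C}P^{1}(\lambda)$ meets the codimension condition $\dim P_{\beta}\leq n-2$ and that finiteness of $B({\mathcal R}^{\mathbb C}_{x})$ must be secured for the equality case of Theorem \ref{tbending} to apply --- are points the paper leaves implicit, so you are if anything slightly more careful than the source.
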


\subsection{\sc{Deformations of a tubular foliation.}} Let ${\mathcal T}_{P}$ be the tubular foliation around an embedded submanifold $P$ of a compact Riemannian manifold $(M,g)$ and denote by $\sigma$ its tangent distribution. Let $f:M\to \RR$ be the associated transnormal function given in (\ref{f}). For each $\varepsilon\in [0,\frac{\pi}{2}],$ we construct a new distribution $\sigma_{\varepsilon}$ given by
$$
\sigma_{\varepsilon}(x) =
\left\{
\begin{array}{lcl}
\sigma(x),& & \mbox{\rm if}\;|f(x)|\leq \sin\varepsilon\;\mbox{\rm or}\;f(x) = \pm 1;\\[0.4pc]
T_{x}M,& & \mbox{\rm if}\; \sin\varepsilon < |f(x)| < 1.
\end{array}
\right.
$$
Note that $\sigma_{0}$ is a trivial distribution and $\sigma_{\pi/2}$ coincides with $\sigma.$ 

\begin{lemma} The distribution $\sigma_{\varepsilon},$ for each $\varepsilon \in [0,\frac{\pi}{2}],$ is smooth and completely inte\-gra\-ble.
\end{lemma}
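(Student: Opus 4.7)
The plan is to prove smoothness and complete integrability separately, both by a direct construction based on the transnormal function $f$ from (\ref{f}). The observation driving everything is that $\sigma_{\varepsilon}$ coincides with the smooth tangent distribution $\sigma$ of $\mathcal{T}_{P}$ on the closed region $\{|f|\leq\sin\varepsilon\}\cup P\cup {\rm Conj}(P)$ and equals the full tangent bundle on the complementary open set $U=\{\sin\varepsilon<|f|<1\}$; the task reduces to smoothing out this dimensional jump and exhibiting explicit integral submanifolds on each piece.

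For smoothness in the sense of Sussmann, I will produce a set of smooth local vector fields spanning $\sigma_{\varepsilon}$ at every point. Since $\sigma\subset\sigma_{\varepsilon}$ pointwise, every vector field belonging to $\sigma$ already belongs to $\sigma_{\varepsilon}$ and spans it wherever $\sigma_{\varepsilon}=\sigma$. To cover the missing normal direction on $U$, I choose a $C^{\infty}$ cutoff $\phi:\RR\to[0,\infty)$ vanishing identically on $[-\sin\varepsilon,\sin\varepsilon]$ and strictly positive outside (available by standard exponential-tail bump constructions) and consider the global smooth vector field $Z=\phi(f)\nabla f$. Then $Z$ vanishes outside $U$ (also using $\nabla f=0$ on $P\cup{\rm Conj}(P)$) and is a non-zero normal to the leaves of $\mathcal{T}_{P}$ on $U$; in particular $Z\in\sigma_{\varepsilon}$ pointwise, and the $\sigma$-tangent fields together with $Z$ span $\sigma_{\varepsilon}(x)$ at every $x\in M$.

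For complete integrability I will exhibit an integral submanifold through every $x\in M$ by case analysis on $f(x)$. If $|f(x)|\leq\sin\varepsilon$, the leaf $f^{-1}(f(x))$ of $\mathcal{T}_{P}$ through $x$ is entirely contained in $\{|f|\leq\sin\varepsilon\}$ (since $f$ is constant on the leaves of $\mathcal{T}_{P}$) and is a codimension-one integral submanifold of $\sigma_{\varepsilon}=\sigma$. If $\sin\varepsilon<|f(x)|<1$, the connected component of $U$ containing $x$ is open in $M$ and hence an $n$-dimensional integral submanifold of $\sigma_{\varepsilon}=TM$. If $f(x)=\pm 1$, the singular leaf $P$ or ${\rm Conj}(P)$ containing $x$ is an integral submanifold of $\sigma$, and therefore of $\sigma_{\varepsilon}$.

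The main obstacle is the transition across the critical level set $\{|f|=\sin\varepsilon\}$, where $\dim\sigma_{\varepsilon}$ drops abruptly from $n$ to $n-1$: smoothness there hinges on $\phi$ flattening to infinite order at $\pm\sin\varepsilon$, which is what forces the construction into the $C^{\infty}$ (not analytic) category, while integrability at such points is automatic since the boundary leaf $f^{-1}(\pm\sin\varepsilon)$ is itself a leaf of $\mathcal{T}_{P}$ sitting inside the region where $\sigma_{\varepsilon}=\sigma$.
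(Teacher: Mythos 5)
Your proposal is correct. The smoothness half is essentially identical to the paper's argument: the author also adjoins to a spanning family $D$ of $\sigma$ the single field $(\alpha\comp f)\nabla f$, with $\alpha$ an explicit exponential-tail cutoff vanishing on $[-\sin\varepsilon,\sin\varepsilon]$ and positive outside, which is exactly your $Z=\phi(f)\nabla f$. Where you genuinely diverge is in the integrability half. The paper does not exhibit integral submanifolds at all; it observes that the enlarged spanning family $D\cup\{(\alpha\comp f)\nabla f\}$ can be taken \emph{locally of finite type} and then invokes Sussmann's Theorem 8.1 to conclude complete integrability. You instead verify the paper's definition of complete integrability directly, by a case analysis on $f(x)$: the leaf of ${\mathcal T}_{P}$ through $x$ when $\lvert f(x)\rvert\leq\sin\varepsilon$ (including the critical levels $\lvert f\rvert=\sin\varepsilon$, where the leaf stays inside the region on which $\sigma_{\varepsilon}=\sigma$), the open connected component of $\{\sin\varepsilon<\lvert f\rvert<1\}$ when $x$ lies there, and the singular leaves $P$, ${\rm Conj}(P)$ when $f(x)=\pm 1$. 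Your route is more elementary and self-contained --- it avoids checking the locally-finite-type hypothesis and quoting Sussmann --- and it makes the leaves of the deformed foliation visible at once, which is in fact how the paper describes $({\mathcal T}_{P})_{\varepsilon}$ immediately afterwards. What the paper's route buys in exchange is the full Stefan--Sussmann package: the maximal integral manifolds automatically partition $M$ into a singular foliation, rather than merely passing through each point, so nothing further needs to be said before calling $({\mathcal T}_{P})_{\varepsilon}$ a foliation. Both arguments are sound for the lemma as stated.
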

\begin{proof} Let $D$ be any subset of ${\mathfrak X}_{loc}(M)$ spanning $\sigma.$ Then $D\cup\{(\alpha\comp f)\nabla f\}$ determines $\sigma_{\varepsilon},$ where $\alpha$ is a smooth real function  which is $0$ on $[-\sin\varepsilon,  \sin\varepsilon ]$ and positive out this interval, e.g.
$$
\alpha(t) =
\left\{
\begin{array}{lcl}
e^{-\frac{1}{(t + \sin\varepsilon)^{2}}}, & & \mbox{\rm if}\;t<-\sin\varepsilon;\\[0.4pc]
0, & &\mbox{\rm if}\; -\sin\varepsilon \leq t \leq  \sin\varepsilon;\\[0.4pc]
e^{-\frac{1}{(t-\sin\varepsilon)^{2}}}, & & \mbox{\rm if}\;t> \sin\varepsilon.
\end{array}
\right.
$$
Then, $\sigma_{\varepsilon}$ is smooth and, because $D\cup\{(\alpha\comp f)\nabla f\}$ can be taken {\em locally of finite type}, it follows from \cite[Theorem 8.1]{Suss} that $\sigma_{\varepsilon}$ is completely integrable. 
\end{proof}Then $\sigma_{\varepsilon}$ determines a foliation $({\mathcal T}_{P})_{\varepsilon,}$ called the $\varepsilon$-{\em deformation} of ${\mathcal T}_{P},$ whose singular leaves are the singular leaves of ${\mathcal T}_{P},$ that is, the submanifold $P$ in Case I and $P$ and ${\rm Conj}(P)$ in Case II, together with the level sets $f^{-1}(\sin\varepsilon)$ and $f^{-1}(-\sin\varepsilon).$ 
\begin{remark}{\rm Clearly $({\mathcal T}_{P})_{\varepsilon},$ for all $\varepsilon \in ]0,\pi/2 [,$ is not almost regular and it is not a Riemannian foliation. Nevertheless, its tangent distribution $\sigma_{\varepsilon}$ is Riemannian.}
\end{remark}
\begin{proposition} We have:
\begin{equation}\label{int3}
B(({\mathcal T}_{P})_{\varepsilon}) = \frac{1}{2}\sum_{a=1}^{n-1}\int_{\frac{\mu}{2\pi}(\pi-2\varepsilon)}^{\frac{\mu}{2\pi}(\pi+2\varepsilon)}\Big (\int_{P(r)}\alpha_{a}^{2}\;dv_{P(r)}\Big )dr,
\end{equation}
where $\alpha_{a},$ $a=1,\dots n-1,$ are the eigevalues functions for the shape operator of the level sets of ${\mathcal T}_{P}.$
\end{proposition}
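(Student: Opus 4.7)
The plan is to split $M$, up to a set of measure zero, into two regions according to where $\sigma_\varepsilon$ is trivial and where it coincides with the tubular distribution $\sigma$, and to observe that only the latter contributes to the total bending. Let
\[
U_\varepsilon^{\mathrm{tub}} = \{x\in M : |f(x)|\leq \sin\varepsilon\}, \qquad U_\varepsilon^{\mathrm{triv}} = \{x\in M : \sin\varepsilon < |f(x)| < 1\}.
\]
On $U_\varepsilon^{\mathrm{triv}}$ the distribution $\sigma_\varepsilon$ is trivial, hence by the remark in Section $4$ its intrinsic torsion vanishes and the contribution to $B(({\mathcal T}_P)_\varepsilon) = \tfrac14\int_M \|\xi\|^2\, dv_M$ is zero. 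The extra singular leaves $f^{-1}(\pm\sin\varepsilon)$ and the original singular leaves ($P$, and $\mathrm{Conj}(P)$ in Case II) form a closed set of measure zero, so they contribute nothing either.

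On the open part of $U_\varepsilon^{\mathrm{tub}}$ the distribution $\sigma_\varepsilon$ agrees with the codimension one tubular distribution $\sigma$ of ${\mathcal T}_P$, so their intrinsic torsions coincide there. Next, I translate the cut-off into the radial variable $r = d(P,\cdot)$. Using $f(x)=\cos(\pi r(x)/\mu)$ from (\ref{f}) and the monotonicity of $\cos$ on $[0,\pi]$, the condition $|\cos(\pi r/\mu)|\leq \sin\varepsilon$ is equivalent to $\pi r/\mu\in[\pi/2-\varepsilon,\,\pi/2+\varepsilon]$, i.e.,
\[
r\in\left[\frac{\mu}{2\pi}(\pi-2\varepsilon),\; \frac{\mu}{2\pi}(\pi+2\varepsilon)\right].
\]
Hence $U_\varepsilon^{\mathrm{tub}}$ is the radial slab swept by the regular tubes $P(r)$ for $r$ in this interval.

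Finally, I invoke the computation from the proof of Proposition \ref{pone}: picking a local orthonormal frame $\{E_1,\dots,E_{n-1},E_n=\nabla r\}$ with $E_1,\dots,E_{n-1}$ diagonalizing the shape operator $S$ of $P(r)$, one has $\|\xi\|^2 = 2\sum_{a=1}^{n-1}\alpha_a^2$ on the tubular part. Applying Fubini's theorem to $\tfrac14\int_M\|\xi\|^2\, dv_M$ restricted to $U_\varepsilon^{\mathrm{tub}}$, and using $dv_M = dv_{P(r)}\, dr$ on $T(P,\mu)\setminus P$, yields formula (\ref{int3}). The only real obstacle is bookkeeping: one must verify that the boundary level sets $f^{-1}(\pm\sin\varepsilon)$ have measure zero (they do, being smooth hypersurfaces since $\pm\sin\varepsilon$ are regular values of $f$ for $\varepsilon\in\,]0,\pi/2[$), and observe that the particular bump function $\alpha$ chosen in the preceding lemma plays no role here, since $\|\xi\|^2(x)$ depends only on the subspace $\sigma_\varepsilon(x)$ and not on the generators used to span it.
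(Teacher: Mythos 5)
Your proposal is correct and follows essentially the same route as the paper: the paper likewise splits the regular set of $\sigma_{\varepsilon}$ into the trivial part $f^{-1}(]-1,-\sin\varepsilon[\,\cup\,]\sin\varepsilon,1[)$, where $\xi$ vanishes, and the tubular part $f^{-1}(]-\sin\varepsilon,\sin\varepsilon[)=\{\exp_{P}ru : |r-\tfrac{\mu}{2}|<\tfrac{\mu\varepsilon}{\pi}\}$, where $\xi$ coincides with the intrinsic torsion of $\sigma$, and then applies the Fubini computation of Proposition \ref{pone}. Your extra remarks on the measure-zero boundary level sets and the irrelevance of the bump function are correct bookkeeping that the paper leaves implicit.
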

\begin{proof} The set of the regular points $M_{r}(\varepsilon)$ of $\sigma_{\varepsilon}$ is the union of the open subset
$$
\begin{array}{lcl}
M_{r}(\varepsilon) & = & M_{r}^{1}(\varepsilon) \cup M_{r}^{2}(\varepsilon) = \{x\in M_{r}(\varepsilon)\mid d(x) = n-1\}\cup \{x\in M_{r}(\varepsilon)\mid d(x) = n\}\\[0.4pc]
 & = &  f^{-1}(]-\sin\varepsilon, \sin\varepsilon [)\cup f^{-1}(]-1,-\sin\varepsilon [ \cup ]\sin\varepsilon, 1[).
\end{array}
$$
Moreover,  $M_{r}^{1}(\varepsilon)$ can be expressed as
\[
M_{r}^{1}(\varepsilon) = \{\exp_{P}ru\;\mid \; u\in T^{\bot}P,\; \|u\| = 1,\;\; |r-\frac{\mu}{2}|< \frac{\mu\varepsilon}{\pi}\}.
\]
Since the intrinsic torsion of $\sigma_{\varepsilon}$ vanishes on $M_{r}^{2}(\varepsilon)$ and it coincides with the intrinsic torsion $\xi^{1}$ of $\sigma$ on $M_{r}^{1}(\varepsilon),$ the result follows directly using (\ref{bendingone}).
\end{proof}

\begin{example}{\rm Let ${\mathcal E}_{x}$ be the spherical foliation around a point $x$ in the $2$-sphere $S^{2}(\lambda).$ Then a transnormal function $f,$ such that ${\mathcal E}_{x} = {\mathcal F}_{f},$ is defined as $f(\exp_{x}rv) = \cos t\sqrt{\lambda}r,$ where $v\in T_{x}S^{n}(\lambda)$ and $\|v\| = 1.$ Its level sets are round circles centered at $x$ with $\alpha(r)=-\sqrt{\lambda}\cot t\sqrt{\lambda}r$ and $A_{x}^{S^{2}(\lambda)}(r)= \frac{2\pi}{\sqrt{\lambda}}\sin t\sqrt{\lambda}r.$ From Theorem \ref{Tori}, we know that $B({\mathcal E}_{x})= \infty.$ Nevertheless, the $\varepsilon$-deformation $({\mathcal E}_{x})_{\varepsilon}$ of ${\mathcal E}_{x},$ for all $\varepsilon\in [0,\frac{\pi}{2}[,$ has finite total bending. In fact, from (\ref{int3}), we get
\[
B({\mathcal F}_{\varepsilon}) = \pi\int_{\pi/2 -\varepsilon}^{\pi/2 + \varepsilon} \cos^{2}t\sin^{-1}t\;dt = \pi\Big( \ln\left(\frac{1+ \sin\varepsilon}{1-\sin\varepsilon}\right) -2\sin\varepsilon\Big).
\]
}
\end{example}

\end{document}